\documentclass[psamsfonts, dvipsnames, 10pt]{amsart}
\textheight=22cm
\textwidth=18cm
\hoffset=-2.3cm
\voffset=-1.7cm
\usepackage{xcolor}
\usepackage[colorlinks=true,
pdfstartview=FitV, linkcolor=red, citecolor=green,
urlcolor=blue]{hyperref}
 
\usepackage{amsmath,amsfonts,latexsym,amssymb}
\usepackage{amssymb,amsfonts, amsmath}
\usepackage{mathrsfs}
\usepackage[latin1]{inputenc}
\usepackage[T1]{fontenc}
\usepackage{ae,aecompl}
\usepackage{braket}
\usepackage{comment}
\usepackage{amssymb,amsfonts, amsmath}
 \usepackage{mathtools} 
\usepackage[all,arc]{xy}
\usepackage{enumerate}
\usepackage{mathrsfs}
\usepackage{color}
%Pour les images
\usepackage{graphicx}
%Pour les sous-images
\usepackage{subfig}
%package pour sync TeX et pdf sous mac :Fin
%\usepackage{pdfsync}
%\usepackage{showkeys}
\usepackage{verbatim} 
\usepackage{amssymb,amsfonts, amsmath}
\usepackage[all,arc]{xy}
\usepackage{enumerate}
\usepackage{mathrsfs}
\usepackage{hyperref}
\usepackage{xstring}

\newtheorem{theorem}{Theorem}[section]
\newtheorem{lemma}[theorem]{Lemma}
\newtheorem{proposition}[theorem]{Proposition}

\newtheorem{definition}[theorem]{Definition}

\newtheorem{observation}[theorem]{Observation}
%%%%%commandes%%%%%
\renewcommand{\leq}{\leqslant}
\renewcommand{\geq}{\geqslant}
%\newcommand{\proof}{\vskip 0.2 cm\par\noindent{\sc Proof: }}

% Tiny marginpars 
\long\def\@savemarbox#1#2{\global\setbox#1\vtop{\hsize\marginparwidth 
%%%%%  \@parboxrestore #2}}
  \@parboxrestore\tiny\raggedright #2}}
\marginparwidth .75in \marginparsep 7pt

%\newcommand{\qed}{~{\sc Q.e.d.}\vskip 0.2 cm}

%\textwidth=6.5in
%\oddsidemargin=.25in
%\evensidemargin=.25in
%%Suite

%%Application

%\newcommand{\sld}{\mathsf{PSL_2}(\mathbb K)}

%\newcommand{\sln}{\mathsf{SL}_m(\mathbb R)}

%\DeclareMathOperator{\Ad}{Ad}

\title{Quasi-isometric embeddings inapproximable by Anosov representations}
\author{Konstantinos Tsouvalas}
\date{\today}
\AtEndDocument{\bigskip{\footnotesize%
  \textsc{Department of Mathematics, University of Michigan, 530 Church
Street, Ann Arbor, MI 48109, USA} \par  
 \textit{E-mail address}: \texttt{tsouvkon@umich.edu}}}

\begin{document}
\maketitle
\begin{abstract}{We construct examples of quasi-isometric embeddings of word hyperbolic groups into $\mathsf{SL}(d,\mathbb{R})$ for $d \geqslant 5$ which are not limits of Anosov representations into $\mathsf{SL}(d,\mathbb{R})$. As a consequence, we conclude that an analogue of the density theorem for $\mathsf{PSL}(2,\mathbb{C})$ does not hold for $\mathsf{SL}(d,\mathbb{R})$ when $d \geq 5$.}\end{abstract}
\section{Introduction}
In this short paper we construct the first examples of word hyperbolic and quasi-isometrically embedded subgroups $\Gamma$ of $\mathsf{SL}(d,\mathbb{R})$ for $d\geqslant 5$ which are not limits of Anosov representations of $\Gamma$ into $\mathsf{SL}(d,\mathbb{R})$. More precisely, we prove:

\begin{theorem} \label{limit} Let $g \geqslant 1$ and $\Gamma$ be the word hyperbolic group with presentation  $$\Bigg \langle a_1,b_1,...,a_{2g},b_{2g}, c_{1},d_1,...,c_{2g},d_{2g} \Bigg| \begin{matrix}
[a_1,b_1]...[a_{2g},b_{2g}], [c_1,d_1]...[c_{2g},d_{2g}],\\
[a_1,b_1]...[a_g,b_g][c_1,d_1]...[c_{g},d_{g}] 

\end{matrix} \Bigg \rangle$$ 
\noindent \textup{(i)} For every $d \geqslant 5$ there exists a quasi-isometric embedding $\rho:\Gamma \rightarrow \mathsf{SL}(d,\mathbb{R})$ such that $\rho$ is not a limit of Anosov representations of $\Gamma$ into $\mathsf{SL}(d,\mathbb{R})$.\\

\noindent \textup{(ii)} For $g \geqslant 4$, there exists a strongly irreducible, quasi-isometric embedding \hbox{$\rho:\Gamma \rightarrow \mathsf{SL}(12,\mathbb{R})$} such that $\rho$ is not a limit of Anosov representations of $\Gamma$ into $\mathsf{SL}(12, \mathbb{R})$.  \end{theorem}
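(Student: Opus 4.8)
The plan is to produce, for the representation we construct, a numerical invariant $I(\cdot)$ of representations $\Gamma\to\mathsf{SL}(d,\mathbb{R})$ that (a) is locally constant on $\hom(\Gamma,\mathsf{SL}(d,\mathbb{R}))$, (b) vanishes on every Anosov representation, and (c) is non-zero on $\rho$. Granting these, $\rho$ cannot lie in the closure of the Anosov locus, for a sequence $\rho_n\to\rho$ of Anosov representations would, by (a), eventually satisfy $I(\rho_n)=I(\rho)$, contradicting (b) against (c). I would take $I$ to be (the relevant component of) a Stiefel--Whitney class of the flat bundle $E_\rho$ associated with $\rho$, living in $H^2(\Gamma;\mathbb{Z}/2)$. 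This is natural here: $\Gamma$ is the fundamental group of a finite aspherical $2$-complex $X$, namely four once-holed genus-$g$ surfaces with boundary glued to a common circle representing $\alpha:=[a_1,b_1]\cdots[a_g,b_g]$, so $X$ is a $K(\Gamma,1)$, $\operatorname{cd}\Gamma=2$, and $H^2(\Gamma;\mathbb{Z}/2)\cong(\mathbb{Z}/2)^3$. Local constancy of $I$ is automatic, since it is the pullback of a fixed class on $B\mathsf{SL}(d,\mathbb{R})^{\mathrm{top}}$ along $X\to B\mathsf{SL}(d,\mathbb{R})^{\delta}\to B\mathsf{SL}(d,\mathbb{R})^{\mathrm{top}}$, and the homotopy class of this composite varies continuously with $\rho$.

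First I would record the structure of $\Gamma$. Writing $\beta=\prod_{i=g+1}^{2g}[a_i,b_i]$, $\gamma=\prod_{i=1}^{g}[c_i,d_i]$, $\delta=\prod_{i=g+1}^{2g}[c_i,d_i]$, the three relators say precisely that $\beta=\gamma=\delta^{-1}=\alpha^{-1}$ in $\Gamma$, so $\Gamma$ is the fundamental group of the graph of groups with four rank-$2g$ free vertex groups $F_1,\dots,F_4$ and edge group $\mathbb{Z}=\langle\alpha\rangle$ amalgamated into each $F_i$ along the respective boundary word. Consequently $\Gamma$ is word hyperbolic (Bestvina--Feighn, the edge group being cyclic hence quasiconvex), and it contains four closed orientable genus-$2g$ surface subgroups $\pi_1(\Sigma_1),\dots,\pi_1(\Sigma_4)$ obtained by pairing the $F_i$ two at a time across $\langle\alpha\rangle$; each $\pi_1(\Sigma_j)$ is a retract of $\Gamma$, hence quasiconvex. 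At the level of homology one has $[\Sigma_1]+[\Sigma_2]=[\Sigma_3]+[\Sigma_4]$ in $H_2(\Gamma;\mathbb{Z})\cong\mathbb{Z}^3$, with $[\Sigma_1],[\Sigma_2],[\Sigma_3]$ a basis, reflecting that $\Sigma_1\sqcup\Sigma_2$ and $\Sigma_3\sqcup\Sigma_4$ are built from the same four half-surfaces.

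The obstruction step — property (b), and the heart of the argument — is to show that if $\rho\colon\Gamma\to\mathsf{SL}(d,\mathbb{R})$ is $P_k$-Anosov for some $1\le k\le d-1$ then $I(\rho)$ lies in a fixed proper subspace of $(\mathbb{Z}/2)^3$. Restricting $\rho$ to the quasiconvex subgroups $\pi_1(\Sigma_j)$ and to $\langle\alpha\rangle$ yields $P_k$-Anosov representations; hence the flat bundle, pulled back to the geodesic-flow space $\mathsf{U}\Sigma_j$ of each surface subgroup, splits continuously as a sum of ranks $k$ and $d-k$. Since $\chi(\Sigma_j)=2-4g$ is even, the Gysin sequence of the circle bundle $\mathsf{U}\Sigma_j\to\Sigma_j$ makes $H^2(\Sigma_j;\mathbb{Z}/2)\hookrightarrow H^2(\mathsf{U}\Sigma_j;\mathbb{Z}/2)$ injective, so one may read off $I(\rho)$ on the class $[\Sigma_j]$ from the Whitney formula applied to this splitting, the Stiefel--Whitney classes of the tautological-bundle pullbacks that define it being controlled by the one-dimensionality of $\partial_\infty\pi_1(\Sigma_j)$. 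The decisive point — and the reason a single closed surface group does not suffice for the theorem — is that these four splittings are all restrictions of one splitting over the flow space of $\Gamma$, whose building blocks are the $\mathsf{U}\Sigma_j$ glued along the single circle $\mathsf{U}\langle\alpha\rangle$; the compatibility forced by this one-dimensional overlap, together with the homological relation above, confines the admissible values of $I$ to a proper subspace of $(\mathbb{Z}/2)^3$.

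Finally the construction. Using the graph-of-groups description, I would fix an $\mathbb{R}$-regular element $M\in\mathsf{SL}(d-2,\mathbb{R})$, pick quasi-isometric embeddings $\tau_i\colon F_i\to\mathsf{SL}(d-2,\mathbb{R})$ with $\tau_i(\alpha)=M$ and with the limit sets of the $\tau_i(F_i)$ in Schottky position relative to the attracting and repelling data of $M$ (possible for free groups by ping-pong, with room since $d-2\ge3$ — this is where $d\ge5$ enters), and apply a combination theorem for amalgams over undistorted cyclic subgroups to obtain a quasi-isometric embedding $\tau\colon\Gamma\to\mathsf{SL}(d-2,\mathbb{R})$ with controlled (one arranges, trivial) characteristic class. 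Separately, composing a representation $\sigma_0\colon\pi_1(\Sigma_1)\to\mathsf{SL}(2,\mathbb{R})$ of odd Euler number with the retraction $\Gamma\to\pi_1(\Sigma_1)$ gives $\sigma\colon\Gamma\to\mathsf{SL}(2,\mathbb{R})$ with $I(\sigma)$ pairing non-trivially with $[\Sigma_1]$; computing $I(\sigma)$ through the retraction (and adjusting $\sigma_0$ if needed) it lands outside the proper subspace of the obstruction step. Then $\rho:=\sigma\oplus\tau\colon\Gamma\to\mathsf{SL}(d,\mathbb{R})$ is a quasi-isometric embedding (because $\tau$ is, and $\|\rho(\gamma)\|\asymp\|\tau(\gamma)\|$ up to a bounded factor once $\|\tau(\gamma)\|$ is large) with $I(\rho)=I(\sigma)\neq0$, proving (i). For (ii), take $d=12$ and $g\ge4$, start from such a block representation, and bend it along the four free factors and the edge group — the genus bound supplying each $F_i$ with enough rank, and the extra dimensions enough room — to deform $\rho$ within $\hom(\Gamma,\mathsf{SL}(12,\mathbb{R}))$ to a strongly irreducible, Zariski-dense quasi-isometric embedding, the invariant $I$ being automatically unchanged along the deformation. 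The main obstacle is the obstruction step: establishing that the four surface-subgroup Anosov splittings, glued over $\langle\alpha\rangle$, confine $I$ to a proper subspace — a global argument over the amalgam, not merely a surface-by-surface one. Secondary difficulties are running the combination theorem with the required control on the characteristic class, and, for part (ii), achieving Zariski density by bending without losing the quasi-isometric embedding property.
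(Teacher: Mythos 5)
Your strategy rests on finding an invariant $I$ that is locally constant on $\textup{Hom}(\Gamma,\mathsf{SL}(d,\mathbb{R}))$ and confined to a proper subspace of $H^2(\Gamma;\mathbb{Z}/2)$ on the Anosov locus; this is where the proposal breaks down, and you yourself flag the "obstruction step" as unproved. It is in fact false: starting from the $P_1$-Anosov representation $\textup{diag}(\rho_0,I_{d-4})$, where $\rho_0:\Gamma\to\mathsf{SO}(3,1)\subset\mathsf{SL}(4,\mathbb{R})$ is convex cocompact, one may replace the identity block by an arbitrary homomorphism $\Gamma\to\mathsf{O}(d-4)$ (or tensor with characters $\Gamma\to\{\pm1\}$) without affecting any singular-value gap, since orthogonal matrices have all singular values equal to $1$; such twists already realize every product $\epsilon_1\smile\epsilon_2$ of classes in $H^1(\Gamma;\mathbb{Z}/2)$ as a summand of $w_2$, and these products span $H^2(\Gamma;\mathbb{Z}/2)\cong(\mathbb{Z}/2)^3$ because each of your four surface subgroups is a retract of $\Gamma$ and carries a symplectic cup-product pairing. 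So Stiefel--Whitney classes of Anosov representations fill out all of $H^2(\Gamma;\mathbb{Z}/2)$ for $d\geqslant 6$, and no class can serve as the obstruction. More structurally, \emph{no} locally constant invariant can prove the theorem for examples of the kind that actually work: the representations the paper builds are of the form $\textup{diag}(\varepsilon(\gamma)^{-1/4}\rho_1(\gamma),\varepsilon(R(\gamma)))$ and deform to an Anosov representation along the path $\varepsilon^t$, $t\in[0,1]$, so they lie in the same connected component of $\textup{Hom}(\Gamma,\mathsf{SL}(d,\mathbb{R}))$ as the Anosov locus. (Your construction step has independent problems --- e.g.\ for $d=5$ it requires a quasi-isometric embedding of $\Gamma$, which contains closed surface subgroups glued along a cyclic subgroup, into $\mathsf{SL}(3,\mathbb{R})$ --- but these are secondary.)

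The obstruction the paper uses is of an entirely different, non-topological nature: it is about signs and reality of leading eigenvalues, a condition that survives passage to limits but is not locally constant. Lemma \ref{positive1} shows that if $\rho$ is a limit of $P_1$-Anosov representations and $\Delta\leqslant\Gamma$ is quasiconvex, of infinite index, with \emph{connected} Gromov boundary, then every element of $\Delta_2$ must be positively semiproximal; the proof restricts the limit maps of the approximating representations to $\partial_\infty\Delta$, takes the convex hull of the resulting connected compact set in an affine chart supplied by transversality, and invokes Benoist's positivity result for groups preserving a properly convex cone. The construction then takes $\Delta$ to be the surface subgroup $\langle a_1,b_1,\dots,a_{2g},b_{2g}\rangle$ and uses block sums and tensor products of a convex cocompact representation into $\mathsf{SO}(3,1)$, a dominating Fuchsian representation of a free retract, and positive characters, tuned (via Lemma \ref{negative}, which produces elements with $\lambda_1<0$ in Fuchsian groups) so that for each $i\leqslant d/2$ some element of $\Delta_2$ has $\wedge^i\rho$ not positively semiproximal. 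If you want to salvage a topological approach you would need an invariant that is not locally constant, which defeats the purpose; I would redirect your effort to the eigenvalue-positivity mechanism.
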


The domain group of the representation $\rho$ in Theorem \ref{limit} is the fundamental group of a book of I-bundles and by Thurston's Geometrization theorem (see Morgan \cite{Morgan}) it admits a convex cocompact representation into $\mathsf{PSL}(2,\mathbb{C})$. We remark that for $d \geqslant 6$ in Theorem \ref{limit} we can replace $\Gamma$ with any word hyperbolic $3$-manifold group which retracts to a free subgroup of rank at least $8$ and is not virtually a free or a surface group. The density conjecture for Kleinian groups established by the work of Brock-Bromberg \cite{density}, Brock-Canary-Minsky \cite{endinglamination}, Namazi-Souto \cite{Namazi-Souto} and Ohshika \cite{Ohshika} implies that every discrete and faithful representation of a word hyperbolic group into $\mathsf{PSL}(2,\mathbb{C})$ is an algebraic limit of Anosov representations. The examples of Theorem \ref{limit} demonstrate the failure of the density conjecture for the higher rank Lie group $\mathsf{SL}(d,\mathbb{R})$ for $d \geqslant 5$. \par In Theorem \ref{limit2} we produce examples similar to those in Theorem \ref{limit} (ii) in infinitely many dimensions all of whose elements are semiproximal. Moreover, in Proposition \ref{surface} we also provide examples of quasi-isometric embeddings of surface and free groups into $\mathsf{SL}(4,\mathbb{R})$ and $\mathsf{SL}(6,\mathbb{R})$ which are not in the closure of the space of Anosov representations.  \par An example of a quasi-isometric embedding of the free group of rank $2$ which is not Anosov was constructed by Guichard in \cite{Guichard}, see also \cite[Proposition A.1, p. 67]{GGKW}. Moreover, Guichard's example is unstable, i.e. it is a limit of indiscrete representations but also a limit of $P_2$-Anosov representations (see Definition \ref{Anosov}) of the free group of rank 2 into $\mathsf{SL}(4,\mathbb{R})$. \par For our constructions we shall use the following fact: for a $P_1$-Anosov subgroup $\Gamma$ of $\mathsf{SL}(d,\mathbb{R})$, $d \geqslant 4$, every quasiconvex infinite index subgroup $\Delta$ of $\Gamma$  with connected Gromov boundary contains a finite-index subgroup all of whose elements are positively semiproximal (see Lemma \ref{positive1}). It follows that if $\rho:\Gamma \rightarrow \mathsf{SL}(k,\mathbb{R})$ is a limit of $P_i$-Anosov representations, or equivalently $\wedge^i \rho$ is a limit of $P_1$-Anosov representations, then the group $\wedge^{i}\rho(\Delta)$ contains a finite index subgroup consisting entirely of positively semiproximal elements. We remark that the connecteness of the Gromov boundary of $\Delta$ cannot be dropped since every non-cyclic free subgroup of a lift of a Hitchin surface group into $\mathsf{SL}(2d,\mathbb{R})$ contains an element all of whose eigenvalues are negative.

\vspace{0.3cm}

\noindent \textbf{Acknowledgements.} I would like to thank my advisor Richard Canary for his support and encouragement and Sara Maloni for motivating discussions which led to the construction of the examples of this paper. I also thank Jeff Danciger, Fran\c cois Labourie and Nicolas Tholozan for helpful discussions and comments. This work was partially supported by grants DMS-1564362 and DMS-1906441 from the National Science Foundation.

\section{Background}
In this section we define Anosov representations and prove two lemmas required for our construction. For a transformation $g \in \mathsf{SL}(d,\mathbb{R})$ we denote by $\ell_{1}(g) \geqslant ...\geqslant \ell_{d}(g)$ and $\sigma_1(g)\geqslant ...\geqslant \sigma_{d}(g)$ the moduli of the eigenvalues  and singular values of $g$ in decreasing order respectively. We recall that $\sigma_{i}(g)=\sqrt{\ell_{i}(gg^t)}$ for $1\leqslant i \leqslant d$, where $g^{t}$ denotes the transpose matrix. For $1 \leqslant i \leqslant d-1$, the matrix $g \in \mathsf{SL}(d,\mathbb{R})$ is called $P_i$-{\em proximal} if $\ell_{i}(g)>\ell_{i+1}(g)$. In the case $g$ is $P_1$-proximal we denote by $\lambda_{1}(g)$ the unique eigenvalue of $g$ of maximum modulus. The matrix $g$ is called \emph{semiproximal} if either $\ell_1(g)$ or $-\ell_1(g)$ is an eigenvalue of $g$ and \emph{positively semiproximal} if $\ell_1(g)$ is an eigenvalue of $g$.

\subsection{Anosov representations} For a finitely generated group $\Gamma$ we fix a left invariant word metric $d_{\Gamma}$ and for $\gamma \in \Gamma$ $|\gamma|_{\Gamma}$ denotes the distance of $\gamma$ from the identity element $e \in \Gamma$. If $\Gamma$ is word hyperbolic, $\partial_{\infty}\Gamma$ denotes the Gromov boundary of $\Gamma$. A representation $\rho:\Gamma \rightarrow \mathsf{SL}(d,\mathbb{R})$ is called a {\em quasi-isometric embedding} if  the orbit map of $\rho$, $\tau_{\rho}:\Gamma \rightarrow \mathsf{SL}(d,\mathbb{R})/\mathsf{SO}(d)$ where $\tau_{\rho}(\gamma)=\rho(\gamma)\mathsf{SO}(d)$ for $\gamma \in \Gamma$ is a quasi-isometric embedding. In other words, there exist constants $J,K>0$ such that $$\frac{1}{K}e^{\frac{1}{J}|\gamma|_{\Gamma}} \leqslant \frac{\sigma_1(\rho(\gamma))}{\sigma_{d}(\rho(\gamma))} \leqslant Ke^{J|\gamma|_{\Gamma}}$$ for every $\gamma \in \Gamma$. For a representation of a finitely generated group a much stronger property than being a quasi-isometric embedding is to be {\em Anosov}. Anosov representations were introduced by Labourie in \cite{labourie-invent} in his study of Hitchin representations and further developed by Guichard-Wienhard in \cite{GW}. We define Anosov representations by using a characterization in terms of gaps between singular values of elements, established by Kapovich-Leeb-Porti in \cite{KLP1} and Bochi-Potrie-Sambarino \cite{BPS}. 

\begin{definition} \label{Anosov} Let $\Gamma$ be a finitely generated group and $\rho:\Gamma \rightarrow \mathsf{SL}(d,\mathbb{R})$ be a representation. For $1 \leqslant i \leqslant \frac{d}{2}$, the representation $\rho$ is called $P_{i}$-Anosov if there exist constants $C,a>0$ with the property: $$\frac{\sigma_i(\rho(\gamma))}{\sigma_{i+1}(\rho(\gamma))} \geqslant Ce^{a|\gamma|_{\Gamma}}$$ for every $\gamma \in \Gamma$.  \end{definition} 
In addition, it was proved in \cite{KLP1} and \cite{BPS} that a finitely generated group which admits an Anosov representation into $\mathsf{SL}(d,\mathbb{R})$ is necessarily word hyperbolic. We call a representation $\rho:\Gamma \rightarrow \mathsf{SL}(d,\mathbb{R})$ Anosov if it is $P_i$-Anosov for some $i$. Note that $\rho$ is $P_i$-Anosov if and only if the exterior power $\wedge^i\rho$ is $P_1$-Anosov. The property of being Anosov is stable, i.e. for every $P_i$-Anosov representation $\rho$ there exists an open neighbourhood $U$ of $\rho$ in $\textup{Hom}(\Gamma, \mathsf{SL}(d,\mathbb{R}))$ consisting entirely of $P_i$-Anosov (see \cite{labourie-invent} and \cite[Theorem 5.14]{GW}). Examples of Anosov representations include quasi-isometrically embedded subgroups of simple real rank $1$ Lie groups and their small deformations into higher rank Lie groups, Hitchin representations and holonomies of strictly convex projective structures on closed manifolds. \par For $1\leqslant m \leqslant d-1$, denote by $\mathsf{Gr}_{m}(\mathbb{R}^d)$ the Grassmannian of $m$-planes in $\mathbb{R}^d$. Every $P_{i}$-Anosov representation $\rho:\Gamma \rightarrow \mathsf{SL}(d,\mathbb{R})$ admits a pair of continuous, $\rho$-equivariant maps \hbox{$\xi_{\rho}^{i}:\partial_{\infty}\Gamma \rightarrow \mathsf{Gr}_{i}(\mathbb{R}^d)$} and \hbox{$\xi_{\rho}^{d-i}:\partial_{\infty}\Gamma \rightarrow \mathsf{Gr}_{d-i}(\mathbb{R}^d)$} called the {\em Anosov limit maps}. We refer the reader to \cite{GW} and \cite{GGKW} for a careful discussion of Anosov limit maps and their properties. We mention here some of their main properties:
\medskip

\noindent \textup{(i)} The maps $\xi_{\rho}^{i}$ and $\xi_{\rho}^{d-i}$ are {\em compatible}, i.e. for every $x \in \partial_{\infty}\Gamma$, $\xi_{\rho}^{k}(x) \subset \xi_{\rho}^{d-i}(x)$.\\
\noindent \textup{(ii)} For every $\gamma \in \Gamma$ non-trivial, $\rho(\gamma)$ is $P_{i}$ and $P_{d-i}$-proximal where $\xi_{\rho}^{i}(\gamma^{+})$ and $\xi_{\rho}^{d-i}(\gamma^{+})$ are the attracting fixed points of $\rho(\gamma)$ in $\mathsf{Gr}_{i}(\mathbb{R}^d)$ and $\mathsf{Gr}_{d-i}(\mathbb{R}^d)$ respectively. \\
\noindent \textup{(iii)} The maps $\xi_{\rho}^{i}$ and $\xi_{\rho}^{d-i}$ are {\em transverse}, i.e. for every $x,y \in \partial_{\infty}\Gamma$ with $x \neq y$, $\mathbb{R}^d=\xi_{\rho}^{i}(x)\oplus \xi_{\rho}^{d-i}(y)$.\\

\par The key property of Anosov representations that we use for our construction is that when $\Gamma$ is neither a free or a surface group, then for a $P_1$-Anosov representation $\rho:\Gamma \rightarrow \mathsf{SL}(d,\mathbb{R})$, the image $\rho(\Gamma)$ contains many elements with positive first eigenvalue. The following lemma is essential for the construction of our examples. For a finitely generated group $\Gamma$ we denote by $\Gamma_2$ the intersection of all finite-index subgroups of $\Gamma$ of index at most $2$. An open subset $\Omega$ of $\mathbb{P}(\mathbb{R}^d)$ is called {\em properly convex} if it is bounded and convex in an affine chart of $\mathbb{P}(\mathbb{R}^d)$.
\medskip

\begin{lemma}\label{positive1} Let $\Gamma$ be a word hyperbolic group and $\Delta$ a quasiconvex and infinite index subgroup of $\Gamma$ such that $\partial_{\infty} \Delta$ is connected. Let $d \geqslant 4$ and $\rho:\Gamma \rightarrow \mathsf{SL}(d,\mathbb{R})$ be a representation. Suppose that there exists a sequence of $P_1$-Anosov representations $\big \{\rho_{n}:\Gamma \rightarrow \mathsf{SL}(d,\mathbb{R})\big \}_{n \in \mathbb{N}}$ with $\lim_{n} \rho_n=\rho$. Then every $\delta \in \Delta_2$ is positively semiproximal.\end{lemma}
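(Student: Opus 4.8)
The plan is to prove that for every $n$ the sign of the leading eigenvalue of $\rho_n$ along $\Delta$ is governed by a homomorphism $\epsilon_n\colon\Delta\to\mathbb{Z}/2\mathbb{Z}$; then every $\delta\in\Delta_2$ lies in $\ker\epsilon_n$ for all $n$, so $\ell_1(\rho_n(\delta))$ is an eigenvalue of $\rho_n(\delta)$ for all $n$, and this property survives the limit $\rho_n\to\rho$. Write $\xi^1_{\rho_n}\colon\partial_\infty\Gamma\to\mathbb{P}(\mathbb{R}^d)$ and $\xi^{d-1}_{\rho_n}\colon\partial_\infty\Gamma\to\mathsf{Gr}_{d-1}(\mathbb{R}^d)$ for the Anosov limit maps of $\rho_n$, and recall that, $\Delta$ being quasiconvex, $\partial_\infty\Delta$ is realized as a closed $\Delta$-invariant subset of $\partial_\infty\Gamma$---its limit set---containing the attracting fixed point $\delta^+$ of each infinite order $\delta\in\Delta$.

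The first step uses the infinite index hypothesis. Since $\Delta$ is quasiconvex of infinite index, $\partial_\infty\Delta\subsetneq\partial_\infty\Gamma$, so fix $\eta\in\partial_\infty\Gamma\setminus\partial_\infty\Delta$ and put $H_n:=\xi^{d-1}_{\rho_n}(\eta)$. By transversality of the Anosov limit maps of $\rho_n$, $\xi^1_{\rho_n}(x)\oplus H_n=\mathbb{R}^d$ for every $x\in\partial_\infty\Delta$, because $x\neq\eta$. Hence the $\rho_n|_\Delta$-equivariant line bundle $L_n\to\partial_\infty\Delta$ with fibre $L_n(x)=\xi^1_{\rho_n}(x)$, the pullback along $\xi^1_{\rho_n}|_{\partial_\infty\Delta}$ of the tautological bundle on $\mathbb{P}(\mathbb{R}^d)$, carries a continuous nowhere vanishing section: for any fixed nonzero $v\in\mathbb{R}^d/H_n$, the map $x\mapsto(\pi_n|_{L_n(x)})^{-1}(v)$, where $\pi_n\colon\mathbb{R}^d\to\mathbb{R}^d/H_n$ is the quotient map, is one. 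So $L_n$ is a trivial line bundle over $\partial_\infty\Delta$.

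Since $L_n$ is trivial, its ray bundle $S(L_n):=\big(L_n\setminus\{\text{zero section}\}\big)/\mathbb{R}_{>0}\to\partial_\infty\Delta$ is a trivial double cover, and since $\partial_\infty\Delta$ is connected it has exactly two connected components. The $\Delta$-action on $S(L_n)$ induced by $\rho_n$ covers the action on $\partial_\infty\Delta$, hence either fixes or interchanges these two components, which yields a homomorphism $\epsilon_n\colon\Delta\to\mathbb{Z}/2\mathbb{Z}$. For nontrivial $\delta\in\Delta$, $\rho_n(\delta)$ is $P_1$-proximal with attracting line $L_n(\delta^+)=\xi^1_{\rho_n}(\delta^+)$, on which it acts as multiplication by the nonzero real number $\lambda_1(\rho_n(\delta))$; inspecting the two-point fibre of $S(L_n)$ over $\delta^+$ (the two rays of that attracting line) shows that $\delta$ interchanges the components of $S(L_n)$ precisely when $\lambda_1(\rho_n(\delta))<0$. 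Therefore $\epsilon_n(\delta)=0$ if and only if $\ell_1(\rho_n(\delta))=\lambda_1(\rho_n(\delta))$ is an eigenvalue of $\rho_n(\delta)$.

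Finally, let $\delta\in\Delta_2$. As $\Delta_2$ lies in the kernel of every homomorphism $\Delta\to\mathbb{Z}/2\mathbb{Z}$, we have $\epsilon_n(\delta)=0$, i.e. $\det\big(\ell_1(\rho_n(\delta))I-\rho_n(\delta)\big)=0$, for every $n$. Since $\rho_n(\delta)\to\rho(\delta)$ the characteristic polynomials converge coefficientwise, the spectral radius $g\mapsto\ell_1(g)$ is continuous, and $\ell_1(\rho(\delta))\geqslant 1$ because $\det\rho(\delta)=1$; letting $n\to\infty$ gives $\det\big(\ell_1(\rho(\delta))I-\rho(\delta)\big)=0$, so $\rho(\delta)$ is positively semiproximal. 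I expect the delicate point to be precisely this passage to the limit---it is what forces one to package positivity into the homomorphisms $\epsilon_n$---and that packaging rests on both the triviality of $L_n$, where the infinite index hypothesis enters through the choice of $\eta$, and the connectedness of $\partial_\infty\Delta$, without which the components of $S(L_n)$ could be permuted nontrivially and $\epsilon_n$ would be undefined.
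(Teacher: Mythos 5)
Your proof is correct, and while it shares the paper's overall skeleton---use infinite index and quasiconvexity of $\Delta$ to find a point of $\partial_\infty\Gamma$ outside $\partial_\infty\Delta$, use transversality of the limit maps of $\rho_n$ against that point, use connectedness of $\partial_\infty\Delta$ to force an index-$2$ sign phenomenon, and then pass to the limit---your middle step is genuinely different. The paper places $\xi^1_{\rho_n}(\partial_\infty\Delta)$ in the affine chart complementary to $\xi^{d-1}_{\rho_n}(\delta_0^{+})$, restricts to the span $V$ of this compact connected set, observes that the image of $\Delta$ preserves the interior of its convex hull and hence (up to index $2$, which is where $\Delta_2$ enters) a properly convex open cone, and then invokes Benoist's Perron--Frobenius-type result \cite[Lemma 3.2]{benoist-divisible3} to get $\lambda_1(\rho_n(\delta))>0$. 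You instead trivialize the tautological line bundle over $\partial_\infty\Delta$ directly via the complementary hyperplane, pass to its ray double cover, and read off the sign of $\lambda_1(\rho_n(\delta))$ from whether $\delta$ swaps the two components, packaging positivity into an explicit homomorphism $\epsilon_n\colon\Delta\to\mathbb{Z}/2\mathbb{Z}$ that kills $\Delta_2$. Your route is more self-contained (no convexity, no restriction to the subspace $V$, no external citation) and makes the role of connectedness transparent; the paper's is shorter given Benoist's lemma and matches the convex-projective toolkit it uses elsewhere. The limiting step is the same in substance (continuity of the spectrum), and both arguments implicitly treat only infinite-order $\delta$, so that $\rho_n(\delta)$ is $P_1$-proximal with attracting fixed point in $\partial_\infty\Delta$---harmless here, since the groups to which the lemma is applied are torsion-free.
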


\begin{proof} Suppose that $\rho_{0}$ is a $P_1$-Anosov representation with Anosov limit map \hbox{$\xi_{\rho_0}^{1}:\partial_{\infty}\Gamma \rightarrow \mathbb{P}(\mathbb{R}^d)$}. Then, since $\Delta$ has infinite index and is quasiconvex in $\Gamma$, we may find $\delta_0 \in \Gamma$ with $\delta_{0}^{\pm} \notin \partial_{\infty}\Delta$. By transversality, the affine chart $A \subset \mathbb{P}(\mathbb{R}^d)$ defined by the $(d-1)$-plane $\xi_{\rho_0}^{d-1}(\delta_{0}^{+})$ contains the connected compact set $\mathcal{C}_{\Delta}=\xi_{\rho_0}^{1}(\partial_{\infty}\Delta)$. Let $V=\big \langle u: [u] \in \mathcal{C}_{\Delta} \big \rangle$. Note that $\rho_0(\Delta)$ preserves $\mathcal{C}_{\Delta}$ and the restriction of $\rho_{0}|_{\Delta}$ on $V$ is \hbox{$P_1$-Anosov} since $\mathcal{C}_{\Delta}$ contains the attracting fixed point of $\rho_0(\delta)$ for every $\delta  \in \Delta$. The connected set $\mathcal{C}_{\Delta}$ also lies in the affine chart $A\cap \mathbb{P}(V)$ of $\mathbb{P}(V)$ and the convex hull $\textup{Conv}_{A\cap \mathbb{P}(V)}(\mathcal{C}_{\Delta})$ is preserved by $\rho_{0}|_{V}(\Delta)$. In particular, the properly convex subset $\textup{Int}\big(\textup{Conv}_{A\cap \mathbb{P}(V)}(\mathcal{C}_{\Delta})\big)$ is preserved by $\rho_{0}|_{V}(\Delta)$. We obtain a representation $\widetilde{\rho_0}:\Delta \rightarrow \mathsf{GL}(V)$ whose image preserves a properly convex open cone $C\subset V$ and $\widetilde{\rho_0}(\delta)=\rho_0|_{V}(\delta)$ for every $\delta \in \Delta_2$. Note that the attracting fixed point of $\delta \in \Delta_2$ is always in $V$ and $\lambda_1(\widetilde{\rho}(\delta))=\lambda_1(\rho(\delta))=\lambda_1(\rho|_{V}(\delta))$. By \cite[Lemma 3.2]{benoist-divisible3} we have $\lambda_1(\rho_0(\delta))>0$ and hence $\rho_0(\delta)$ is positively proximal. Now let $\delta \in \Delta_2$. By the previous arguments, for every $n \in \mathbb{N}$, $\lambda_1(\rho_n(\delta))>0$ and there exists unit vector $u_n \in \mathbb{R}^d$ such that \hbox{$\rho_n(\delta)u_n=\lambda_1(\rho_n(\delta))u_n$}. Up to passing to a subsequence, we may assume that $\lim_{n}\lambda_1(\rho_n(\delta))$ exists and has to be an eigenvalue (not necessarily unique) of $\lim_{n}\rho_n(\delta)=\rho(\delta)$ of maximum modulus. The conclusion follows.  \end{proof}

On the other hand, the image of Anosov representations might contain elements which are not positively proximal. In fact, this is the case for Fuchsian representations into $\mathsf{SL}(2,\mathbb{R})$.
\medskip
\begin{lemma} \label{negative} Let $F_k$ denote the free group on $k \geqslant 2$ generators. Let  $j: F_k \rightarrow \mathsf{SL}(2,\mathbb{R})$ be a quasi-isometric embedding and $F$ be a free subgroup of $F_k$ of rank at least $2$. Then for every $a \in F_k$ with $a \notin F$, there exists $w \in [F,F]$ such that $\lambda_1 (j(wa))<0$.\end{lemma}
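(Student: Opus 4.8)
The plan is to exploit the action of $j(F)$ on $\mathbb{P}(\mathbb{R}^2)$: multiplying a loxodromic element of very large translation length by the fixed element $j(a)$ yields a loxodromic element whose trace, to leading order, is a ``matrix coefficient'' of $j(a)$ whose sign can be prescribed by placing the axis of the first element suitably inside the Cantor limit set of $j(F)$.

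First I would record standing facts. Since $j$ is a quasi-isometric embedding into $\mathsf{SL}(2,\mathbb{R})$ it is faithful with discrete image and $j(F_k)$ has no parabolics, so every non-trivial element of $j(F_k)$ is loxodromic, i.e.\ has $|\tr|>2$; thus $\lambda_1$ is defined on these elements and $\lambda_1(g)<0$ is equivalent to $\tr(g)<-2$. Since $a\notin F$ we have $wa\neq e$ for every $w\in[F,F]$, hence $j(wa)$ is loxodromic and it suffices to produce $w\in[F,F]$ with $\tr\big(j(wa)\big)<0$. As $F$ is free of rank $\geq 2$, $j(F)$ is non-elementary and convex cocompact, so its limit set $\Lambda\subset\mathbb{P}(\mathbb{R}^2)$ is a Cantor set; and since $[F,F]$ is a nontrivial normal subgroup of $F$ of infinite index, $j([F,F])$ is non-elementary with the same limit set $\Lambda$. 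Consequently the pairs $\big(j(w_0)^{+},j(w_0)^{-}\big)$ of attracting and repelling fixed points of the elements $w_0\in[F,F]$ are dense in the set of pairs of distinct points of $\Lambda$.

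The key computation is the following. For a loxodromic $g\in\mathsf{SL}(2,\mathbb{R})$ write $g=\lambda e_{+}+\lambda^{-1}e_{-}$, where $\lambda=\lambda_1(g)$ and $e_{\pm}$ are the rank-one spectral projections onto the attracting and repelling eigenlines; then $g^{n}=\lambda^{n}e_{+}+\lambda^{-n}e_{-}$, so $\tr\big(g^{n}j(a)\big)=\lambda^{n}\,\tr\big(e_{+}j(a)\big)+\lambda^{-n}\,\tr\big(e_{-}j(a)\big)$. If $\tr\big(e_{+}j(a)\big)<0$, then for every sufficiently large $n$ with $\lambda^{n}>0$ the first term dominates and $\tr\big(g^{n}j(a)\big)<0$. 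Applying this with $g=j(w_0)$, $w_0\in[F,F]$, the element $w=w_0^{n}$ lies in $[F,F]$ and satisfies $\tr\big(j(wa)\big)<0$, hence $\lambda_1\big(j(wa)\big)<0$. Everything is thus reduced to finding $w_0\in[F,F]$ with $\tr\big(e_{+}^{\,j(w_0)}j(a)\big)<0$, where $e_{+}^{\,j(w_0)}=e\big(j(w_0)^{+},j(w_0)^{-}\big)$ and, for distinct lines $\ell_{+},\ell_{-}\subset\mathbb{R}^2$, $e(\ell_{+},\ell_{-})$ denotes the projection onto $\ell_{+}$ along $\ell_{-}$, which depends continuously on $(\ell_{+},\ell_{-})$ off the diagonal.

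To finish, observe that $j(a)$ is loxodromic and hence fixes only two points of $\mathbb{P}(\mathbb{R}^2)$; choose $\ell_{+}\in\Lambda$ not fixed by $j(a)$ which is in addition a two-sided accumulation point of $\Lambda$ (only the countably many endpoints of the complementary arcs fail this). In coordinates with $\ell_{+}=[(1,0)]$, writing $j(a)=\left(\begin{smallmatrix}\alpha&\beta\\\gamma&\delta\end{smallmatrix}\right)$, the condition $j(a)\ell_{+}\neq\ell_{+}$ says $\gamma\neq0$, and a direct computation gives $\tr\big(e(\ell_{+},[(1,t)])\,j(a)\big)=\alpha-\gamma/t$, which tends to $-\infty$ as $t\to0$ from one of the two sides of $\ell_{+}$. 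Since $\ell_{+}$ is a two-sided accumulation point there is $\ell_{-}^{(0)}\in\Lambda$ on that side with $\tr\big(e(\ell_{+},\ell_{-}^{(0)})\,j(a)\big)$ very negative, and by density of the fixed-point pairs of $j([F,F])$ together with continuity of $e(\cdot,\cdot)$ near the off-diagonal point $(\ell_{+},\ell_{-}^{(0)})$ we obtain $w_0\in[F,F]$ with $\tr\big(e_{+}^{\,j(w_0)}j(a)\big)<0$, as required. The step requiring the most care is exactly this sign computation — showing that the leading matrix coefficient changes sign across $\ell_{+}$, which is where the two-sided Cantor structure of $\Lambda$ is used — the rest being routine bookkeeping with loxodromic dynamics.
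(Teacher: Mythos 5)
Your proof is correct, and it takes a genuinely different route from the paper's. Both arguments ultimately rest on the same asymptotic identity $\tr\big(j(w_0)^n j(a)\big)=\lambda^n\tr\big(e_+j(a)\big)+\lambda^{-n}\tr\big(e_-j(a)\big)$, but they extract the negative sign from different places. The paper first invokes \cite[Lemma 2]{CG} (see also \cite[Theorem 1.6]{benoist-divisible0}) to produce $w_0\in[F,F]$ with $\lambda_1(j(w_0))<0$, then only needs the leading coefficient $\tr\big(e_+j(a)\big)=\langle h^{-1}j(a)he_1,e_1\rangle$ to be \emph{nonzero} --- which follows cheaply from transversality of the Anosov limit map, since $\{w_0^\pm\}\cap\{a^\pm\}=\emptyset$ --- because the alternating sign of $\lambda_1(j(w_0))^n$ then forces $\lambda_1(j(w_0^{2n}a))$ and $\lambda_1(j(w_0^{2n+1}a))$ to have opposite signs for large $n$. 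You instead control the \emph{sign} of the leading coefficient directly: you exploit the density of axis pairs of $j([F,F])$ in $\Lambda\times\Lambda$ and the explicit formula $\tr\big(e(\ell_+,[(1,t)])j(a)\big)=\alpha-\gamma/t$, which blows up to $-\infty$ on one side of $\ell_+$, to manufacture $w_0$ with $\tr\big(e_+^{j(w_0)}j(a)\big)<0$; the sign of $\lambda_1(j(w_0))$ is then irrelevant (you simply restrict to $n$ with $\lambda^n>0$). Your version is self-contained and avoids the Choi--Goldman/Benoist input, at the price of the density-of-axes argument and the two-sided accumulation point discussion; the paper's version is shorter and, notably, isolates the existence of a negative-eigenvalue element of $[F,F]$ as reusable input (it is used again elsewhere in the construction). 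All the supporting facts you assert (discreteness and absence of parabolics for quasi-isometric embeddings, coincidence of limit sets for the normal subgroup $[F,F]$, density of fixed-point pairs) are standard and correctly applied.
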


\begin{proof} Note that $j([F,F])$ is discrete in $\mathsf{SL}(2,\mathbb{R})$, hence by \cite[Lemma 2]{CG} (see also \cite[Theorem 1.6]{benoist-divisible0}), there exists $w_{0} \in [F,F]$ such that $\lambda_1(j(w_0))<0$. Then, we may write $$j(w_0)=h \begin{bmatrix}
\lambda_1(j(w_0)) & 0\\ 
 0 & \frac{1}{\lambda_1(j(w_0))}
\end{bmatrix}h^{-1}$$ Since \hbox{$\{w_{0}^{+},w_{0}^{-} \}\cap \{a^{+}, a^{-}\}$} is empty and $j$ is $P_1$-Anosov, by transversality we have that the line \hbox{$j(w_0)\xi_{1}^{j}(a^{\pm})=\xi_{1}^{j}(w_{0}a^{\pm})$} is different from $\xi_{1}^{j}(a^{+}),\xi_{1}^{j}(a^{-})$ and hence $\big \langle h^{-1}j(a)he_1, e_1\big \rangle$ is not zero. Then we notice that $$\lim_{n \rightarrow \infty}\frac{\lambda_{1}(j((w_{0}^{n}a))}{\lambda_1(j(w_{0}^n))}= \big \langle h^{-1}j(a)he_1,e_1 \big \rangle \  \textup{and} \ \lim_{n \rightarrow \infty}\frac{\lambda_1(j(w_0^{2n+1}a))}{\lambda_1(j(w_{0}^{2n}a))}=\lambda_{1}(j(w_0))<0$$  For large enough $n \in \mathbb{N}$, the numbers $\lambda_{1}(j(w_{0}^{2n}a))$ and $\lambda_{1}(j(w_{0}^{2n+1}a))$ have opposite signs and the conclusion follows.\end{proof}

We also need the following observation:

\begin{observation} \label{free} Let $F_2$ be the free group on $\{a,b\}$ and $\rho:F_2 \rightarrow \mathsf{SL}(2,\mathbb{R})$ be a quasi-isometric embedding. Let $k\in \mathbb{N}$ and $\phi_{k}:F_2 \rightarrow F_2$ be the monomorphism defined by $\phi_{k}(a)=b^kab^k$ and $\phi_k(b)=a^kba^k$. There exists $C>0$ such that $\ell_1(\rho(\phi_k(\gamma)) \geqslant \ell_1(\rho(\gamma))^{Ck}$ for every $\gamma \in F_2$ and $k \in \mathbb{N}$. \end{observation}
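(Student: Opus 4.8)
The plan is to split the statement into an analytic input, comparing $\ell_1(\rho(\cdot))$ with word length, and a combinatorial input about the endomorphism $\phi_k$. Throughout, for $\gamma\in F_2$ write $\|\gamma\|:=\lim_{n\to\infty}|\gamma^n|_{F_2}/n$ for the stable word length; this limit exists by subadditivity, is invariant under conjugation, and, by an elementary argument, equals the length of the cyclic reduction of $\gamma$ (if $\gamma=u\gamma_0u^{-1}$ with $\gamma_0$ cyclically reduced of length $L$, then $\gamma_0^{\,n}$ is reduced of length $nL$, so $\big|\,|\gamma^n|_{F_2}-nL\,\big|\leqslant 2|u|_{F_2}$). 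In particular $\|\phi_k(\gamma)\|$ and $\ell_1(\rho(\phi_k(\gamma)))$ depend only on the conjugacy class of $\gamma$, so we may and do assume $\gamma$ is cyclically reduced.

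For the analytic input, recall that $\sigma_1(g)\sigma_2(g)=1$ for $g\in\mathsf{SL}(2,\mathbb{R})$, so $\sigma_1(g)/\sigma_2(g)=\sigma_1(g)^2$, and that $\ell_1(g)=\lim_{n\to\infty}\sigma_1(g^n)^{1/n}$ by Gelfand's spectral radius formula ($\sigma_1$ being the operator norm). Feeding the powers $\gamma^n$ into the defining inequalities of the quasi-isometric embedding $\rho$ and taking $n$-th roots, one obtains, with $J\geqslant 1$ as in that definition,
$$e^{\frac{1}{2J}\|\gamma\|}\ \leqslant\ \ell_1(\rho(\gamma))\ \leqslant\ e^{\frac{J}{2}\|\gamma\|}\qquad\text{for all }\gamma\in F_2 .$$
Given this, the Observation follows from the purely combinatorial identity $\|\phi_k(\gamma)\|=(2k+1)\|\gamma\|$, since then (applying the lower bound to $\phi_k(\gamma)\in F_2$ and the upper bound to $\gamma$)
$$\ell_1(\rho(\phi_k(\gamma)))\ \geqslant\ e^{\frac{1}{2J}\|\phi_k(\gamma)\|}\ =\ e^{\frac{2k+1}{2J}\|\gamma\|}\ \geqslant\ e^{\frac{k}{J}\|\gamma\|}\ \geqslant\ \ell_1(\rho(\gamma))^{2k/J^2},$$
so that $C=2/J^2$ works; the case $\gamma=e$ is trivial since both sides equal $1$.

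It remains to prove the combinatorial identity for a cyclically reduced $\gamma=x_1x_2\cdots x_L$ with each $x_i\in\{a^{\pm1},b^{\pm1}\}$, the case $k=0$ being trivial. For $k\geqslant1$, each block $\phi_k(x_i)$ is one of the words $b^{\pm k}a^{\pm1}b^{\pm k}$ or $a^{\pm k}b^{\pm1}a^{\pm k}$; it is reduced of length $2k+1$, and its first letter equals its last letter, namely $\psi(x_i)$, where $\psi$ is the involution of $\{a^{\pm1},b^{\pm1}\}$ interchanging $a^{\pm1}$ with $b^{\pm1}$. Since $\psi$ is injective and $\psi(x^{-1})=\psi(x)^{-1}$, no cancellation occurs when the blocks are concatenated, not even at the cyclic junction $(x_L,x_1)$: a cancellation at a junction $(x_i,x_{i+1})$ (indices mod $L$) would force $\psi(x_i)=\psi(x_{i+1})^{-1}=\psi(x_{i+1}^{-1})$, hence $x_i=x_{i+1}^{-1}$, contradicting that $\gamma$ is cyclically reduced. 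Therefore $\phi_k(x_1)\cdots\phi_k(x_L)$ is a cyclically reduced word of length $(2k+1)L$, so $\|\phi_k(\gamma)\|=(2k+1)L=(2k+1)\|\gamma\|$, as required.

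The step I expect to require the most care is the analytic one: one cannot bound $\ell_1$ by $\sigma_1$ directly (that controls only the upper side), so one must pass to powers $\gamma^n$, invoke Gelfand's formula, and correctly identify $\lim_n|\gamma^n|_{F_2}/n$ with the cyclic reduction length. Once this two-sided comparison is in place, the combinatorial identity is a one-line check via the involution $\psi$, and the final chain of exponential inequalities is immediate.
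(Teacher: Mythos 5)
Your proof is correct and complete: the two-sided comparison $e^{\|\gamma\|/(2J)}\leqslant \ell_1(\rho(\gamma))\leqslant e^{J\|\gamma\|/2}$ via Gelfand's formula and stable (cyclic-reduction) length, together with the no-cancellation observation that each block $\phi_k(x)$ begins and ends with the letter $\psi(x)$, gives exactly the uniform constant $C=2/J^2$ that the Observation asserts. The paper states this as an Observation with no proof supplied, so there is nothing to compare against; your argument is the natural one the author evidently has in mind, and it fills the gap cleanly.
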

\medskip

\section{The construction}
By using Lemma \ref{positive1} and \ref{negative} we construct representations of the fundamental group $\Gamma$ of a book of I-bundles which are not limits of Anosov representations of $\Gamma$ in $\mathsf{SL}(d,\mathbb{R})$ for $d \geq 5$. We recall that given a group $K$ and a subgroup $H$ of $H$, a homomorphism $r:K \rightarrow H$ is called a {\em retract} if $r(h)=h$ for every $h \in H$.
\medskip

\noindent {\em Proof of Theorem \ref{limit}.} Let $\Delta =\langle a_1,b_1,...,a_{2g},b_{2g} \rangle$ and \hbox{$F=\langle a_{1},b_{1},...,a_{g},b_{g}\rangle$} be the subgroup of $\Delta$ which is free on $2g$ generators. Note that there exists a retract of $\Gamma$ onto the surface subgroup $\Delta$. Then $\Delta$ retracts onto $F$, by sending $a_{i} \mapsto a_{i}, b_{i}\mapsto b_{i}$, $a_{g+i} \mapsto b_{g-i+1}$ and $b_{g+i}\mapsto a_{g-i+1}$ for $1 \leqslant i \leqslant g$. We finally obtain a retract $R:\Gamma \rightarrow F$.
\par We first construct reducible examples in all dimensions greater than or equal to $5$. By \cite{CMT} (see Section 4 page 26), there exists a convex cocompact representation $i:\Gamma \xhookrightarrow{} \mathsf{SL}(2,\mathbb{C})$ such that $\Delta$ is a subgroup of $\mathsf{SL}(2,\mathbb{R})$. Let $S: \mathsf{SL}(2,\mathbb{C}) \rightarrow \mathsf{SO}_{0}(3,1)$ be the spin homomorphism such that $S(\textup{diag}(a,\frac{1}{a}))$ is conjugate to $\textup{diag}\big(a^2,1,1,1, \frac{1}{a^2}\big)$ for every $a \in \mathbb{R}$. By composing $i$ with $S$ we obtain a quasi-isometric embedding \hbox{$\rho_{0}:\Gamma \rightarrow \mathsf{SO}(3,1)$} which is a $P_1$-Anosov representation regarded as a representation into $\mathsf{SL}(4,\mathbb{R})$. Note that for every $\gamma \in \Delta$, the matrix $\rho_0(\gamma)$ is positively proximal.
\medskip

\textup{(i)} Suppose that $d=5$. By postcomposing $i$ with the irreducible representation $\tau_2:\mathsf{SL}(2,\mathbb{C}) \rightarrow \mathsf{SL}(4,\mathbb{R})$, where $$\tau_2(g)=\begin{bmatrix}
\textup{Re}(g) & -\textup{Im}(g) \\ 
\textup{Im}(g) & \textup{Re}(g)
\end{bmatrix}, \ g \in \mathsf{SL}(2,\mathbb{C}),$$ we obtain a $P_2$-Anosov representation \hbox{$\rho_1:\Gamma \rightarrow \mathsf{SL}(4,\mathbb{R})$} such that $\rho_{1}(\Delta)$ is a subgroup of $\tau_2(\mathsf{SL}(2,\mathbb{R}))$. By Lemma \ref{negative}, we can find \hbox{$w \in [F,F] \subset \Delta_{2}$} such that \hbox{$\lambda_{1}(\varphi(wa_{1}^2))<0$}. Now we consider a non-trivial map \hbox{$\varepsilon: F \rightarrow \mathbb{R}^{+}$ }such that $\varepsilon(wa_{1}^2)=\varepsilon(a_1^2)=x$ with $x^{5/4}> \ell_{1}(\rho(wa_1^2))$. We consider the representation \hbox{$\rho:\Gamma \rightarrow \mathsf{SL}(5,\mathbb{R})$} defined as: $$\rho(\gamma)=\begin{bmatrix}
\frac{1}{\sqrt[4]{\varepsilon(\gamma)}} \rho_{1}(\gamma) & 0 \\ 
   0  & \varepsilon(R(\gamma))\\
\end{bmatrix}, \ \gamma \in \Gamma$$ Notice that the first $3$ eigenvalues of the matrix $\rho(wa_1^2)$ are $x$, $x^{-1/4}\lambda_1(\rho_1(wa_1^2))$ and $x^{-1/4}\lambda_1(\rho_1(wa_1^2))$. The matrix $\wedge^2 \rho(wa_1^2)$ is not positively semiproximal. Since $wa_1^2 \in \Delta_2$, by Lemma \ref{positive1} the representation $\rho$ cannot be a limit of $P_2$-Anosov representations of $\Gamma$ into $\mathsf{SL}(4,\mathbb{R})$. Note also that $\textup{ker}(\varepsilon)\cap \Delta_2$ contains a free subgroup and $\varphi(\textup{ker}(\varepsilon) \cap \Delta_2)$ is discrete subgroup of $\mathsf{SL}(2,\mathbb{R})$. Hence, by Lemma \ref{negative}, there exists $h \in \Delta_2$ with $\varepsilon(h)=1$ and \hbox{$\lambda_1(\rho_1(h))=\lambda_1(\varphi(h))<0$}. Therefore, by Lemma \ref{positive1} $\rho$ is not a limit of $P_1$-Anosov representations of $\Gamma$ into $\mathsf{SL}(5,\mathbb{R})$.
\par We now assume that $d=6$. We can find a quasi-isometric embedding \hbox{$j:F \rightarrow \mathsf{SL}(2,\mathbb{R})$} such that \hbox{$\ell_{1}(j(\gamma))\geqslant \ell_{1}(\rho_{0}(\gamma))^2$} for every $\gamma \in F$. Now we consider the representation \hbox{$\rho:\Gamma \rightarrow \mathsf{SL}(6,\mathbb{R})$} defined as follows: $$\rho(\gamma)=\begin{bmatrix}
\rho_{0}(\gamma) & 0 \\ 
  0  & j(R(\gamma))
\end{bmatrix}, \ \gamma \in \Gamma$$ By Lemma \ref{negative}, we can find $w \in F \cap \Delta_2$ such that $\lambda_{1}(j(w))<0$. Since $\ell_{1}(j(w))>\ell_{1}(\rho_{0}(w))$, the matrix $\rho(w)$ is both $P_{1}$ and $P_{2}$-proximal, $\lambda_{1}(\rho(w))<0$ and $\lambda_{1}(\wedge^2 \rho(w))<0$. Moreover, the matrix $\wedge^3 \rho(w)$ has the number $\lambda_1(j(w)) \ell_1(\rho_0(w))<0$ as an eigenvalue of maximum modulus and multiplicity two. It follows by the previous lemma that $\rho, \wedge^2 \rho$ and $\wedge^3 \rho$ cannot be a limit of $P_{1}$-Anosov representations. This completes the proof of this case.
\par Now suppose $d \geqslant 7$. We consider again a convex cocompact representation \hbox{$j: \langle a_1,b_1 \rangle \rightarrow \mathsf{SL}(2,\mathbb{R})$} which uniformly dominates the restriction $\rho_{0}|_{\langle a_1,b_1\rangle}$. There exists $w \in [\langle a_1,b_1\rangle, \langle a_1,b_1\rangle]$ such that $\lambda_1(j(\iota_1'(wa_1^2))<0$. We consider group homomorphisms $\varepsilon_{1},...,\varepsilon_{d-6}:\langle a_1,b_1 \rangle \rightarrow \mathbb{R}^{+}$ such that $\ell_1(j(wa_1^2))>\varepsilon_1(a_1^2)>...>\varepsilon_{d-6}(a_1^2)>\ell_1(\rho_{0}(wa_1^2))$. The representation $\rho:\Gamma \rightarrow \mathsf{GL}(d,\mathbb{R})$ defined by the blocks $$\rho(\gamma)=\textup{diag} \Big(\rho_{0}(\gamma), j\big(R(\gamma)\big), \varepsilon_1\big(R(\gamma)\big),...,\varepsilon_{d-6}\big(R(\gamma)\big) \Big)$$ has the property that $\wedge^{i} \rho(wa_1^2)$ is proximal but not positively proximal for $i=1,...,d-4$. Finally, by Lemma \ref{positive1} it follows that for every $1 \leqslant i \leqslant \frac{d}{2}$, the representation $\frac{1}{\sqrt[d]{\textup{det}(\rho(\gamma))}}\rho(\gamma)$ is not a limit of $P_{i}$-Anosov representations.
\par \textup{(ii)} Now we construct a strongly irreducible representation of $\Gamma$ into $\mathsf{SL}(12,\mathbb{R})$ which is not a limit of $P_i$-Anosov representations for $1 \leqslant i \leqslant 6$. We assume that $g \geqslant 4$. By Observation \ref{free}, we can find quasi-isometric embeddings \hbox{$\iota_1: \langle a_1,b_1,a_2 \rangle \rightarrow \mathsf{SL}(2,\mathbb{R})$} and \hbox{$\iota_2: \langle b_2,a_3,b_3 \rangle  \rightarrow \mathsf{SL}(2,\mathbb{R})$} such that \hbox{$\ell_1(\iota_1(g)) \geqslant \ell_1(S(g))^{3}$}, for every $g \in \langle a_1,b_1,a_2 \rangle$ and $\ell_1(\iota_2(h)) \geqslant \ell_1(S(h))^{5}$ for every $h \in \langle b_2,a_3,b_3 \rangle$. By Lemma \ref{negative}, we can find \hbox{$w \in \Gamma_2 \cap \langle a_1,b_1 \rangle$} such that \hbox{$\lambda:=\lambda_1(\iota_1(wa_{2}^2))<0$}. Let $\mu=\lambda_1(S(wa_2^2))>0$. Now consider a non-trivial map \hbox{$\varepsilon: \langle a_1,b_1,a_2 \rangle \rightarrow \mathbb{R}^{+}$} such that $\varepsilon(a_2)=x$ and $$|\lambda|>x^{3}> \frac{|\lambda|}{\mu^2}>1>\frac{\mu^2}{|\lambda|}$$ and the representation $\iota_{1}': \langle a_1,b_1, a_2 \rangle \rightarrow \mathsf{SL}(3,\mathbb{R})$ defined as follows: $$\iota_1'(\gamma)= \begin{bmatrix}
\frac{1}{\sqrt{\varepsilon(\gamma)}}\iota_1(\gamma) & 0 \\ 
  0   & \varepsilon(\gamma) \\
\end{bmatrix}, \ \gamma \in \langle a_1,b_1, a_2 \rangle $$ 
Notice that $\varepsilon(wa_2^2)=x^2$ and by the choice of $x$, the matrix $\iota_1'(wa_2^2)$ is proximal with eigenvalues in decreasing order $\frac{\lambda}{x},x^2,\frac{1}{\lambda x}$. By Lemma \ref{negative} we can also find $z \in [\langle b_2, a_3\rangle, \langle b_2,a_3 \rangle]$ such that $s:=\lambda_{1}(\iota_2(zb_3^2))<0$. We consider the representations $\iota_{2}': \langle b_2,a_3,b_3 \rangle \rightarrow \mathsf{SL}(3,\mathbb{R})$ defined as follows: $$\iota_2'(\delta)= \begin{bmatrix}
\iota_2(\delta) & 0 \\ 
  0   & 1 \\
\end{bmatrix}, \ \delta \in \langle b_2,a_3,b_3 \rangle $$ and $A:F \rightarrow \mathsf{SL}(3,\mathbb{R})$ defined as follows: $$A(\gamma)=\iota_1'(\gamma), \ \gamma \in \langle a_1,b_1,a_2 \rangle$$ $$A(\delta)=\iota_2'(\delta), \ \delta \in \langle b_2,a_3,b_3 \rangle$$ $$A(\langle  a_3,b_4,...,a_{g},b_{g} \rangle) \ \textup{is chosen to be Zariski dense in} \ \mathsf{SL}(3,\mathbb{R})$$ We obtain a Zariski dense representation $R \circ A:\Gamma \rightarrow \mathsf{SL}(3,\mathbb{R})$. 
\par We first observe that $\rho_0 \otimes (A \circ R)$ is is strongly irreducible and a quasi-isometric embedding. For every finite-index subgroup $H$ of $\Gamma $ the restriction of the product \hbox{$\rho_0 \times (A\circ R):H \rightarrow \mathsf{SO}(3,1) \times \mathsf{SL}(3,\mathbb{R})$} is Zariski dense. Note that the tensor product representation $\otimes: \mathsf{SO}(3,1) \times \mathsf{SL}(3,\mathbb{R}) \rightarrow \mathsf{SL}(12,\mathbb{R})$, $(g,h) \mapsto g \otimes h$ is irreducible. Hence, any proper $(\rho_0 \otimes (A\circ R))(H)$-invariant subspace $V$ of \hbox{$\mathbb{R}^{12}=\mathbb{R}^4 \otimes_{\mathbb{R}} \mathbb{R}^3$} has to be invariant under $g \otimes h$ for every $g \in \mathsf{SO}(3,1)$ and $h \in \mathsf{SL}(3,\mathbb{R})$. Therefore, $V$ is trivial and $\rho_0 \otimes (A \circ R)$ is strongly irreducible. Moreover, for every $\gamma \in \Gamma$, \hbox{$\sigma_1\big(\rho_0(\gamma)\otimes A(R(\gamma))\big)=$} $\sigma_1(\rho_0(\gamma))\sigma_1(A(R(\gamma)))$ and hence $\rho_0 \otimes (A\circ R)$ is a quasi-isometric embedding since $\rho_0$ is \hbox{$P_1$-Anosov.}
\par We claim that the tensor product representation $\rho_0 \otimes (A\circ R):\Gamma \rightarrow \mathsf{SL}(12,\mathbb{R})$ is not a limit of Anosov representations. We consider the element $wa_{2}^2 \in \Gamma_2$. We have $A(R(wa_2^2))=A(wa_2^2)=\iota_1'(wa_2^2)$ and the matrix \hbox{$\rho_0(wa_{2}^2)\otimes A(wa_2^2)$} is conjugate to $$g=\ \textup{diag} \Big(\mu^2, 1,1, \frac{1}{\mu^2} \Big) \otimes \textup{diag} \Big(\frac{\lambda}{x}, x^2, \frac{1}{\lambda x} \Big), \ \ \lambda=\lambda_1(wa_2^2)<0$$ By the choice of $x$, since $|\lambda|>x^3>\frac{|\lambda|}{\mu^2}>1$, the first $7$ eigenvalues in decreasing order of their moduli are $$\frac{\lambda}{x}\mu^2, \ x^2 \mu^2,\  \frac{\lambda}{x},\ \frac{\lambda}{x},\ x^2,\ x^2, \ \frac{\lambda}{x\mu^2} $$ The matrix $\wedge^{i}g$ is proximal for $i=1,2,4,6$ but not positively proximal. Thus, by Lemma \ref{positive1}, $\rho \otimes A$ is not a limit of $P_i$-Anosov representations for $i=1,2,4,6$. The matrix $\wedge^5 g$ has as an eigenvalue of maximum modulus and multiplicity $2$ the number $\lambda^3 \mu^4 x<0$. Therefore, $\wedge^5 g$ is not positively semiproximal and $\rho \otimes A$ cannot be a limit of $P_5$-Anosov representations, again by Lemma \ref{positive1}. Now we consider the element $zb_{3}^2 \in \Gamma_2$. Note that $R(A(zb_3^2))=A(zb_3^2)=\iota_2'(zb_3^2)$ and \hbox{$\rho_0(zb_{3}^2)\otimes A(zb_3^2)$} is conjugate to the matrix $$h=\textup{diag} \Big( \nu^2,1,1, \frac{1}{\nu^2}\Big) \otimes \textup{diag} \Big(s,1, \frac{1}{s} \Big), \ s=\lambda_1(\iota_2(zb_3^2))<0, \ \nu=\lambda_1(S(zb_3^2))$$ Since $|s|>\nu^4$, the first $5$ eigenvalues of $h$ in decreasing order of their moduli are $$s \nu^2, \ s, \ s, \ \frac{s}{\nu^2}, \nu^2$$ We notice that $\wedge^3 h$ is proximal with first eigenvalue $s^3 \nu^2<0$. It follows that $\rho \otimes A$ is not a limit of $P_3$-Anosov representations. $\qed$\\

\begin{rmk}  The proof of Lemma \ref{positive1} only uses the fact that every $P_i$-Anosov representation admits a continuous, dynamics preserving and $\rho$-equivariant map $\xi_{\rho}^{i}:\partial_{\infty}\Gamma \rightarrow \mathsf{Gr}_{i}(\mathbb{R}^d)$. Therefore, in Theorem \ref{limit} the representation $\rho$ admits an open neighbourhood $U \subset \textup{Hom}(\Gamma, \mathsf{SL}(d,\mathbb{R}))$ such that for every $1 \leqslant i\leqslant \frac{d}{2}$ and every $\rho' \in U$ there is no continuous, $\rho'$-equivariant and dynamics preserving map \hbox{$\xi_{\rho'}^{i}:\partial_{\infty}\Gamma \rightarrow \mathsf{Gr}_{i}(\mathbb{R}^d)$}. \end{rmk}

\section{Additional examples}
By following similar arguments as in Theorem \ref{limit} (ii) and by increasing the number of surface vertex groups of the fundamental group of the I-bundle, it is possible to obtain strongly irreducible quasi-isometric embeddings for infinitely many odd dimensions such that every non-trivial element is semiproximal. We denote by $S: \mathsf{SL}(2,\mathbb{C})\rightarrow \mathsf{SO}_{0}(3,1)$ the spin homomorphism.
\begin{theorem} \label{limit2} Let $g\geqslant 3$ and $\Gamma$ be as in Theorem \ref{limit}, $n \geqslant 5$ and let $\Delta_n$ be the amalgamated free product of $2^{n-3}+1$ copies of $\Gamma$ along the maximal cyclic subgroup of $\Gamma$ generated by the element $w=[a_1,b_1]...[a_g,b_g]$. For every $n$ odd there exists a strongly irreducible quasi-isometric embedding $\tau_n:\Delta_n \rightarrow \mathsf{SL}(3n,\mathbb{R})$ which is not a limit of Anosov representations  and for every $\gamma \in \Gamma_n$, $\tau_n(\gamma)$ has all of its eigenvalues of maximum modulus real. \end{theorem}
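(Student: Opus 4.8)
The strategy is to run the tensor‑product construction from the proof of Theorem~\ref{limit}(ii), but with the four–dimensional ``hyperbolic'' factor replaced by an $n$–dimensional one and with $\Gamma$ replaced by the amalgam $\Delta_n$. The purpose of taking $2^{n-3}+1$ copies of $\Gamma$ is twofold: to have enough amalgamation loci to bend the hyperbolic factor until it becomes Zariski dense in $\mathsf{SL}(n,\mathbb{R})$, and to have, in every one of the $\lfloor 3n/2\rfloor$ exterior powers, a ``slot'' in which to exhibit an element whose image under $\wedge^{i}\tau_n$ is proximal but not positively semiproximal. I expect the genuinely delicate point to be carrying out the Klein--Maskit combination that produces the convex cocompact hyperbolic factor and the bending that produces Zariski density \emph{simultaneously}; the rest is an elaboration of bookkeeping already present in Theorem~\ref{limit}(ii).

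First I would construct the hyperbolic factor $\rho_0^{(n)}$. Each copy $\Gamma^{(j)}$ of $\Gamma$ admits a convex cocompact representation into $\mathsf{PSL}(2,\mathbb{C})$ sending $w=[a_1,b_1]\cdots[a_g,b_g]$ to a primitive loxodromic element with real multiplier; choosing these in sufficiently general position and applying an iterated Klein--Maskit combination along $\langle w\rangle$ yields a convex cocompact embedding $\Delta_n\hookrightarrow\mathsf{PSL}(2,\mathbb{C})$, which I lift to $\mathsf{SL}(2,\mathbb{C})$ and push forward by the spin homomorphism $S$ and the inclusion $\mathsf{SO}_0(3,1)\subset\mathsf{SL}(n,\mathbb{R})$. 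Bending the resulting $P_1$‑Anosov representation at the $2^{n-3}$ ``new'' amalgamation loci by generic elements of the centralizer of the image of $w$, and invoking stability of Anosov representations, one obtains $\rho_0^{(n)}\colon\Delta_n\to\mathsf{SL}(n,\mathbb{R})$ that is still $P_1$‑Anosov — hence a quasi‑isometric embedding with every $\rho_0^{(n)}(\gamma)$ $P_1$‑proximal, in particular with a \emph{simple real} eigenvalue of maximal modulus — and, for enough copies, Zariski dense; this is exactly where the bound $2^{n-3}+1$ is used.

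Next I would build the three–dimensional factor as in Theorem~\ref{limit}(ii): a retract $R_n\colon\Delta_n\to F_n$ onto a free group (each $\Gamma^{(j)}$ retracts onto a free group and these assemble over $\langle w\rangle$) and a representation $A_n\colon F_n\to\mathsf{SL}(3,\mathbb{R})$ which on finitely many rank‑three free subgroups has the block form $\mathrm{diag}\!\big(\varepsilon^{-1/2}\iota,\varepsilon\big)$ with $\iota$ a quasi‑isometric embedding into $\mathsf{SL}(2,\mathbb{R})$ — so that Lemma~\ref{negative} provides elements there with a negative real eigenvalue of maximal modulus and Observation~\ref{free} provides the domination $\ell_1(\iota(\cdot))\geqslant\ell_1(S(\cdot))^{Ck}$ — and is Zariski dense in $\mathsf{SL}(3,\mathbb{R})$ overall. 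Set $\tau_n=\rho_0^{(n)}\otimes(A_n\circ R_n)\colon\Delta_n\to\mathsf{SL}(3n,\mathbb{R})$. Since $\sigma_1$ of a tensor product is the product of the $\sigma_1$'s and $\rho_0^{(n)}$ is a quasi‑isometric embedding, $\tau_n$ is a quasi‑isometric embedding; since both factors and their restrictions to finite–index subgroups are Zariski dense, Goursat's lemma together with $n\neq 3$ and irreducibility of the tensor‑product homomorphism $\mathsf{SL}(n,\mathbb{R})\times\mathsf{SL}(3,\mathbb{R})\to\mathsf{SL}(3n,\mathbb{R})$ shows $\tau_n$ is strongly irreducible; and because $\rho_0^{(n)}(\gamma)$ has a simple real eigenvalue of maximal modulus, the eigenvalues of maximal modulus of $\tau_n(\gamma)$ are $\lambda_1\!\big(\rho_0^{(n)}(\gamma)\big)$ times the maximal‑modulus eigenvalues of $A_n(R_n(\gamma))$, so on the subgroup $\Gamma_n$ on which $A_n\circ R_n$ takes semiproximal values (the $\iota$‑pieces and $\ker R_n$) every eigenvalue of maximal modulus of $\tau_n(\gamma)$ is real, giving the last assertion.

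Finally, to see that $\tau_n$ is not a limit of Anosov representations it suffices, since $\wedge^{i}\rho'$ is $P_1$‑Anosov precisely when $\rho'$ is $P_i$‑Anosov, to exhibit for every $1\leqslant i\leqslant\lfloor 3n/2\rfloor$ a vertex group $\Gamma^{(j_i)}$ — which is quasiconvex of infinite index in $\Delta_n$ and has connected Gromov boundary homeomorphic to $S^2$ — and an element $\delta_i\in(\Gamma^{(j_i)})_2$ with $\wedge^{i}\tau_n(\delta_i)$ proximal but not positively semiproximal, and then apply Lemma~\ref{positive1}. This is the computation of Theorem~\ref{limit}(ii) repeated $\lfloor 3n/2\rfloor$ times: using a fresh $\iota$‑piece of $A_n$ inside each $\Gamma^{(j_i)}$ and the domination of Observation~\ref{free}, one arranges the eigenvalues $\lambda_p\!\big(\rho_0^{(n)}(\delta_i)\big)\,\mu_q\!\big(A_n(R_n(\delta_i))\big)$ of $\tau_n(\delta_i)$ so that the first eigenvalue of $\wedge^{i}\tau_n(\delta_i)$ is negative real (or a complex conjugate pair), the supply of $2^{n-3}+1$ copies guaranteeing room to do this for all $i$ at once. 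The main difficulty in this last step is purely combinatorial — tracking which products of eigenvalues can tie for the top in each exterior power — and is handled exactly as in the $\mathsf{SL}(12,\mathbb{R})$ case.
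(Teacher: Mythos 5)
Your overall architecture agrees with the paper's: tensor an $n$-dimensional convex cocompact factor with a $3$-dimensional factor pulled back through a retract onto a free group, then rule out limits of $P_i$-Anosov representations by applying Lemma \ref{positive1} to exterior powers. However, two of your steps have genuine gaps. The first concerns the $n$-dimensional factor, which is the technical heart of the theorem. The paper does not perform a one-shot bending ``by generic elements of the centralizer of $\rho(w)$'' inside $\mathsf{SL}(n,\mathbb{R})$: it performs $n-4$ successive Johnson--Millson bendings through the chain $\mathsf{SO}(3,1)\subset \mathsf{SO}(4,1)\subset \cdots \subset \mathsf{SO}(n-1,1)$, at each stage regrouping the $2^{n-3}$ non-distinguished vertex groups into half as many sub-amalgams, using that $\mathfrak{so}(m,1)$ is self-normalizing in $\mathfrak{so}(m+1,1)$ to enlarge the Zariski closure one step at a time, and Thurston's stability of convex cocompactness in rank one to keep the representation convex cocompact. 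That halving is exactly where the exponent $2^{n-3}$ comes from; it has nothing to do with having ``a slot for each exterior power'' (note that $2^{n-3}$ is exponential in $n$ while the number of exterior powers is linear). Your version gives no control on the Zariski closure of $\rho_0^{(n)}$, and without identifying it as an irreducible group such as $\mathsf{SO}(n-1,1)$ you can neither run the strong irreducibility argument nor retain the normalization $\rho_{n-3}(\gamma)=\mathrm{diag}\big(I_{n-4},S(\rho_0(\gamma))\big)$ on the distinguished surface subgroup, which the eigenvalue computation below requires.

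The second gap is in the non-approximability step. Your plan --- one element $\delta_i$ in a different vertex group for each $1\leqslant i\leqslant \lfloor 3n/2\rfloor$, each using a ``fresh $\iota$-piece'' of $A_n$ --- is both unnecessary and unsupported by your construction. The paper uses exactly two elements, $w\in(\Delta_n)_2\cap\langle a_1,b_1\rangle$ and $w'\in(\Delta_n)_2\cap\langle a_2,b_2\rangle$, inside the single surface subgroup on which the big factor is Fuchsian: there $\rho_{n-3}(w)$ is conjugate to $\mathrm{diag}(s^2,I_{n-2},s^{-2})$ with $s<0$, and this highly degenerate spectrum, tensored with $\mathrm{diag}(s,1,s^{-1})$ (respectively with a dominating $\mathrm{diag}(q,1,q^{-1})$ coming from Observation \ref{free}), kills positive semiproximality of $\wedge^{i}\tau_n$ for all relevant even $i$ (respectively all relevant odd $i$) simultaneously. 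In your setup only one vertex group carries a Fuchsian surface subgroup and the retract lands in a free subgroup of that one group, so the other vertex groups have neither controlled spectra nor designated $\iota$-pieces; the ``purely combinatorial'' verification you defer is the actual content of the proof and cannot be carried out as described. Two smaller points: the Gromov boundary of $\Gamma$ is not $S^2$ ($\Gamma$ is convex cocompact but not cocompact in $\mathsf{PSL}(2,\mathbb{C})$), though connectedness is all Lemma \ref{positive1} needs; and the assertion that \emph{all} maximal-modulus eigenvalues of $\tau_n(\gamma)$ are real requires $A_k(F)$ to be a $P_1$-Anosov subgroup of $\mathsf{SL}(3,\mathbb{R})$ (hence with all eigenvalues real for every element), not merely semiproximality on some subgroup as you hedge.
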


\begin{proof} We first need to find a strongly irreducible representation of $\Delta_n$ into $\mathsf{SL}(n,\mathbb{R})$. We shall use a sequence of Johnson-Millson bending deformations defined in \cite{JM} to obtain such a representation. \par Let $\Delta=\Gamma_1\ast_{\langle a \rangle}\Gamma_2$ be the amalgamated free product of two torsion free word hyperbolic groups $\Gamma_1$ and $\Gamma_2$ along the maximal cyclic subroup generated by the element $a \in \Gamma_1 \cap \Gamma_2$. Suppose that  \hbox{$\rho: \Delta \rightarrow \mathsf{SO}(m,1)$} is a convex cocompact representation such that $\rho(\Gamma_{i})$ are Zariski dense in $\mathsf{SO}(m,1)$, and $\rho(a)$ lies in a copy of \hbox{$\mathsf{SO}_{0}{(2,1)} \subset \mathsf{SO}(m,1)$}. Then by the work of Johnson-Millson in \cite{JM} we can find a Zariski dense and convex cocompact deformation \hbox{$\rho_{t}: \Delta \rightarrow \mathsf{SO}(m+1,1)$} of $\rho$ (which we identify with $\textup{diag}(1,\rho)$). We briefly explain the construction (see also \cite[Lemma 6.3]{Kassel}): let $X$ be a vector in $\mathfrak{so}(m+1,1)-\mathfrak{so}(m,1)$ such that $\rho(w)X\rho(w)^{-1}=X$. Then, consider the family of representations $\rho_{t}:\Delta \rightarrow \mathsf{SO}(m+1,1)$ where $\rho_{t}(\gamma)=\rho(\gamma)$ for $\gamma \in \Gamma_1$ and $\rho_{t}(\gamma)=\exp(tX)\rho(\gamma)\exp(-tX)$ for $\gamma \in \Gamma_2$. Note that the Lie algebra $\mathfrak{so}(m,1)$ is self normalizing into $\mathfrak{so}(m+1,1)$. Therefore, for small enough $t>0$ the Lie algebra of the Zariski closure of $\rho_t$, $\mathfrak{g}_{t}$, strictly contains $\mathfrak{so}(m,1)$. It follows that $\rho_{t}$ is Zariski dense in $\mathsf{SO}(m+1,1)$. By the stability of convex cocompact representations into $\mathsf{SO}(m+1,1)$, established by Thurston \cite[Proposition 8.3.3]{Thurston} (see also \cite[Theorem 2.5.1]{CEG}), $\rho_t$ can be chosen to be convex cocompact.
\par Let $\big \{ \Gamma_i \big \}_{i=0}^{2^{n-3}}$ be the vertex groups of $\Delta_n$ and $\Delta_0$ be one of the surface vertex groups of $\Gamma_0$. By \cite{CMT} there exists a convex cocompact representation $\rho_{1}:\Delta_n \rightarrow \mathsf{SO}(3,1)$ such that $\rho_{1}|_{\Delta_0}$ is Fuchsian, i.e. $\rho_{1}|_{\Gamma_0}=S\circ \rho_0$ for some convex cocompact representation $\rho_{0}:\Delta_0 \rightarrow \mathsf{SL}(2,\mathbb{R})$. 
Notice that since $\Gamma$ is not a surface group, $\rho_1(\Gamma_i)$ has to be Zariski dense in $\mathsf{SO}(3,1)$. By the previous remarks we can find a convex cocompact representation $\rho_2:\Gamma_n \rightarrow \mathsf{SO}(4,1)$ such that for $0 \leqslant i \leqslant 2^{n-4}-1$, $\rho_{2}\big(\big \langle \Gamma_{2i+1}, \Gamma_{2i+2} \big \rangle \big)$ is Zariski dense in $\mathsf{SO}(4,1)$ and $\rho_2(\gamma)=\textup{diag}(1,\rho_1(\gamma))$ on $\gamma \in \Delta_0$. Now we see $\Delta_n$ as the amalgamated free product of $\Gamma_0$ with $\big \langle \Gamma_{1}, \Gamma_{2} \big \rangle,...,\big \langle \Gamma_{2^{n-3}-1}, \Gamma_{2^{n-3}} \big \rangle$ (each of them isomorphic to $\Gamma \ast_{\langle w \rangle} \Gamma$) along $\langle w \rangle$. Since for every $i$, $\rho_{2}(\langle \Gamma_{2i+1}, \Gamma_{2i+2}\rangle)$ is Zariski dense, we can find a convex cocompact representation $\rho_3:\Delta_n \rightarrow \mathsf{SO}(5,1)$ such that for $0 \leq i \leq 2^{n-5}-1$, $\rho_3\big(\big \langle \Gamma_{4i+1},\Gamma_{4i+2},\Gamma_{4i+3}, \Gamma_{4i+4}\big \rangle \big)$ is Zariski dense in $\mathsf{SO}(5,1)$ and $\rho_{3}(\gamma)=\textup{diag}\big(1,1,\rho_1(\gamma)\big)$. By continuing similarly we obtain a Zariski dense, convex cocompact representation $\rho_{n-3}:\Delta_n \rightarrow \mathsf{SO}(n-1,1)$ with $\rho_{n-3}(\gamma)=\textup{diag}\big (I_{n-4}, \rho_1(\gamma)\big)$ for $\gamma \in \Delta_0$.
\par Let $F$ be the free subgroup of $\Delta_0$ generated by the elements $a_1,b_1,...,a_g,b_g$ and let $R:\Delta_n \rightarrow F$ be a retract. We may choose a representation $A:F \rightarrow \mathsf{SL}(3,\mathbb{R})$ which is $P_1$-Anosov, $A\big(\langle a_3,b_3,...,a_g,b_g \rangle \big)$ is Zariski dense in $\mathsf{SL}(3,\mathbb{R})$ and $A(\gamma)=\textup{diag}(\rho_{0}(\gamma),1)$ for $\gamma \in \langle a_1,b_1, a_2,b_2 \rangle$. Now we consider $k$ very large and modify $A$ by considering \hbox{$A_{k}:F \rightarrow \mathsf{SL}(3,\mathbb{R})$} such that $A_{k}(\gamma)=\textup{diag}\big(\rho_{0}\big(\phi_{k}(\gamma)),1\big)$ for $\gamma \in \langle a_2,b_2 \rangle$ and $A_{k}(\gamma)=A(\gamma)$ for $\gamma \in \langle a_1,b_1,a_3,...,b_g \rangle$. The map \hbox{$\phi_{k}:\langle a_2,b_2 \rangle  \rightarrow \langle a_2,b_2 \rangle$} defined as in Observation \ref{free} and $k$ is considered large enough such that \hbox{$\ell_1(\rho_{0}(\phi_{k}(\gamma)) \geqslant \ell_{1}(\rho_{1}(\gamma))^{10}$} for every $\gamma \in \langle a_2,b_2 \rangle$. The image $A_{k}(F)$ defines a $P_1$-Anosov subgroup of $\mathsf{SL}(3,\mathbb{R})$.
\par Now we consider the representation $\tau_n:\Delta_n \rightarrow \mathsf{SL}(3n,\mathbb{R})$ defined as follows: $\tau_{n}(\gamma)=\rho_{n-3}(\gamma) \otimes A_{k}(R(\gamma))$ for $\gamma \in \Delta_n$. As in the proof of Theorem \ref{limit} (ii), $\tau_n$ is strongly irreducible since $\rho_{n-3}$ and $A\circ R$ have non-isomorphic irreducible Zariski closures. Moreover, all elements of the group $\tau_n(\Delta_n)$ have all of their eigenvalues of maximum modulus real, since $A|_{F}$ and $\rho_{n-3}$ are $P_1$-Anosov into $\mathsf{SL}(3,\mathbb{R})$ and $\mathsf{SL}(n,\mathbb{R})$ respectively. To see that $\tau_n$ is not a limit of Anosov representations we may first find $w \in (\Delta_{n})_{2}\cap \langle a_1,b_1\rangle $ such that $s:=\lambda_{1}(\rho_{0}(w))<0$. Then $g:=\rho_{n-3}(w)\otimes A_{k}(w)$ is conjugate to $\textup{diag}\big(s^2, I_{n-2},\frac{1}{s^2}\big)\otimes \textup{diag}\big(s,1,\frac{1}{s}\big)$. The first $2n-1$ eigenvalues of $g$ in decreasing order are $$s^3, s^2, \underbrace{s,...,s}_{n-1}, \underbrace{1,...,1}_{n-2}$$ and we see that $\wedge^{i}\tau_{n}(w)$ is not positively semiproximal when $i$ is even and $i \leqslant n+1$ and when $n+1 \leq i \leq 2n-1$. We may also find $w' \in (\Delta_n)_2 \cap \langle a_2,b_2 \rangle $ such that $q=\lambda_1(\rho_{0}(\phi_{k}(w'))<0$, let $p=\lambda_1(\rho(w'))$ and note that $|q|>p^{10}$. The matrix $h:=\rho_{n}'(w')\otimes A_{k}(w')$ is conjugate to the matrix $\textup{diag}\big(p^2,I_{n-2},\frac{1}{p^2}\big)\otimes \textup{diag}\big(q,1,\frac{1}{q}\big)$. The first $n+1$ eigenvalues of this matrix in decreasing order are: $$qp^2, \underbrace{q,q,...q}_{n-2}, \frac{q}{p^2}, p^2$$ The matrix $\wedge^{i}\tau_{n}(w')$ is not positively semiproximal when $i$ is odd and $i \leqslant n+1$. The conclusion follows by Lemma \ref{positive1}. \end{proof}

\medskip
In contrast to the previous examples, for the construction of quasi-isometric embeddings of surface groups which are not limits of Anosov representations we need to find elements whose eigenvalues are non-real. 

\medskip
\begin{proposition} \label{surface} Let $S$ be an orientable closed hyperbolic surface of genus at least $4$. There exist quasi-isometric embeddings $\psi:\pi_1(S) \rightarrow \mathsf{SL}(4,\mathbb{R})$ and $\rho:\pi_1(S) \rightarrow \mathsf{SL}(6,\mathbb{R})$ which are not limits of Anosov representations of $\pi_1(S)$ into $\mathsf{SL}(4,\mathbb{R})$ and $\mathsf{SL}(6,\mathbb{R})$ respectively. Moreover, $\rho$ is strongly irreducible. \end{proposition}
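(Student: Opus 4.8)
The plan is to exploit the following elementary fact, already implicit in the last paragraph of the proof of Lemma \ref{positive1}: if $\eta:\pi_1(S)\to\mathsf{SL}(k,\mathbb{R})$ is a limit of $P_i$-Anosov representations, then for every $\gamma\in\pi_1(S)$ the matrix $\wedge^i\eta(\gamma)$ is semiproximal. Indeed $\wedge^i\eta$ is a limit of the $P_1$-Anosov representations $\wedge^i\eta_n$, each $\wedge^i\eta_n(\gamma)$ has the real eigenvalue $\lambda_1(\wedge^i\eta_n(\gamma))$ of maximal modulus, and, after passing to a subsequence, these converge to a real eigenvalue of $\wedge^i\eta(\gamma)$ of maximal modulus (by continuity of the spectral radius and of the roots of the characteristic polynomial). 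Unlike in Theorem \ref{limit}, we cannot replace ``semiproximal'' by ``positively semiproximal'' via Lemma \ref{positive1}: a closed surface group has no quasiconvex, infinite-index subgroup with connected Gromov boundary, since every infinite-index quasiconvex subgroup of a surface group is free. Consequently, for each relevant $i$ we must instead produce an element whose $\wedge^i$-image has \emph{all} eigenvalues of maximal modulus non-real; this is where genus $\geq 4$ is used, through a retraction $R:\pi_1(S)\to F$ onto the free subgroup $F=\langle a_1,\dots,a_g\rangle$ of rank $g\geq4$, whose free generators $a_1,a_2,a_3$ will carry the obstructions.

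For the $\mathsf{SL}(4,\mathbb{R})$ example I would fix a Fuchsian representation $\psi_0:\pi_1(S)\to\mathsf{SL}(2,\mathbb{R})$ and a homomorphism $\tau:F\to\mathsf{GL}^{+}(2,\mathbb{R})$, set $c(\gamma)=\det\big(\tau(R(\gamma))\big)^{-1/4}$, and define
$$\psi\ =\ \big(c\cdot\psi_0\big)\ \oplus\ \big(c\cdot(\tau\circ R)\big)\ :\ \pi_1(S)\longrightarrow\mathsf{SL}(4,\mathbb{R}).$$
This is a homomorphism since $c$ is an $\mathbb{R}_{>0}$-valued homomorphism, and a quasi-isometric embedding since the scalar $c$ leaves singular value ratios unchanged, $\sigma_1(\psi(\gamma))/\sigma_4(\psi(\gamma))\geq\sigma_1(\psi_0(\gamma))^2$ grows exponentially in $|\gamma|_{\pi_1(S)}$, and the $\tau\circ R$-block contributes at most exponentially. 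I would then choose $\tau$ so that $\tau(a_1)$ has eigenvalues $\nu_1e^{\pm i\theta_1}$ with $\theta_1\in(0,\pi)$ and $\nu_1>|\lambda_1(\psi_0(a_1))|$, and $\tau(a_2)$ has eigenvalues $\nu_2e^{\pm i\theta_2}$ with $\theta_2\in(0,\pi)$ and $1<\nu_2<|\lambda_1(\psi_0(a_2))|$. Using $R(a_j)=a_j$, a short computation of eigenvalues shows that the two eigenvalues of $\psi(a_1)$ of maximal modulus are $\nu_1^{1/2}e^{\pm i\theta_1}$, both non-real, so $\psi(a_1)$ is not semiproximal and $\psi$ is not a limit of $P_1$-Anosov representations; and that $\psi(a_2)$ has a simple real eigenvalue of maximal modulus followed by the non-real conjugate pair $\nu_2^{1/2}e^{\pm i\theta_2}$, so that $\wedge^2\psi(a_2)$ has exactly the two non-real eigenvalues $\lambda_1(\psi_0(a_2))e^{\pm i\theta_2}$ of maximal modulus, whence $\psi$ is not a limit of $P_2$-Anosov representations. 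Since $P_1$ and $P_2$ exhaust the Anosov types in $\mathsf{SL}(4,\mathbb{R})$, $\psi$ is not a limit of Anosov representations.

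For the $\mathsf{SL}(6,\mathbb{R})$ example I would fix a Fuchsian $\rho_0:\pi_1(S)\to\mathsf{SL}(2,\mathbb{R})$ and a representation $A:F\to\mathsf{SL}(3,\mathbb{R})$ with $A(F)$ Zariski dense, and set $\rho=\rho_0\otimes(A\circ R)$. Exactly as in the proof of Theorem \ref{limit}(ii), $\rho$ is strongly irreducible: for a finite-index subgroup $H$, the restrictions $\rho_0|_H$ and $(A\circ R)|_H$ are Zariski dense in $\mathsf{SL}(2,\mathbb{R})$ and $\mathsf{SL}(3,\mathbb{R})$, so $(\rho_0\times(A\circ R))|_H$ is Zariski dense in $\mathsf{SL}(2,\mathbb{R})\times\mathsf{SL}(3,\mathbb{R})$ (the two simple factors being non-isomorphic), and $\mathbb{R}^2\otimes\mathbb{R}^3$ is an irreducible $\mathsf{SL}(2,\mathbb{R})\times\mathsf{SL}(3,\mathbb{R})$-module; and $\rho$ is a quasi-isometric embedding because $\sigma_1(\rho(\gamma))/\sigma_6(\rho(\gamma))=\big(\sigma_1(\rho_0(\gamma))/\sigma_2(\rho_0(\gamma))\big)\big(\sigma_1(A(R(\gamma)))/\sigma_3(A(R(\gamma)))\big)$. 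I would then choose $A$ so that, for $j=1,2,3$, the matrix $A(a_j)$ has eigenvalues $r_j^{-2},\,r_je^{\pm i\theta_j}$ with $\theta_j\in(0,\pi)$, where $r_1>1$, $|\lambda_1(\rho_0(a_2))|^{-2/3}<r_2<1$ and $0<r_3<|\lambda_1(\rho_0(a_3))|^{-2/3}$, while $A(a_4),\dots,A(a_g)$ are chosen so that $A(F)$ is Zariski dense. Computing the eigenvalues of $\rho(a_j)=\rho_0(a_j)\otimes A(a_j)$ and ordering their moduli, one checks that the eigenvalues of $\wedge^{j}\rho(a_j)$ of maximal modulus form a single non-real conjugate pair; hence $\wedge^j\rho(a_j)$ is not semiproximal, so $\rho$ is not a limit of $P_j$-Anosov representations, for $j=1,2,3$, and therefore not a limit of Anosov representations.

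The one genuinely delicate point in both constructions is the bookkeeping of eigenvalue moduli: one must verify that the prescribed inequalities order the moduli of $\psi(a_j)$, resp. $\rho(a_j)$, in such a way that the maximal $\wedge^j$-modulus is attained only by products involving exactly one member of a complex conjugate pair --- equivalently, that no accidental coincidence among the moduli reintroduces a real eigenvalue of maximal modulus in $\wedge^j$. Each verification reduces to a handful of elementary inequalities among powers of $\nu_j$, resp. $r_j$, and $|\lambda_1(\psi_0(a_j))|$, resp. $|\lambda_1(\rho_0(a_j))|$, which hold by the above choices; the remaining checks (that the two representations are quasi-isometric embeddings, and that $\rho$ is strongly irreducible) are routine from the block and tensor structures.
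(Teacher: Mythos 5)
Your proof is correct and follows essentially the same route as the paper: block-diagonal (resp.\ tensor) constructions twisted by a homomorphism of the retract free group whose generators are sent to scaled rotations, arranged so that for each $i$ some $\wedge^i\psi(\gamma)$ (resp.\ $\wedge^i\rho(\gamma)$) has only non-real eigenvalues of maximal modulus, hence is not semiproximal and cannot be a limit of $P_i$-Anosov representations. The only cosmetic differences are that you make explicit the semiproximality-of-limits fact that the paper leaves implicit, and you use three separate elements $a_1,a_2,a_3$ for the types $P_1,P_2,P_3$ in the $\mathsf{SL}(6,\mathbb{R})$ case where the paper reuses $a_1$ for both $P_1$ and $P_3$.
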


\begin{proof} Let $\rho_1:\pi_1(S)\rightarrow \mathsf{SL}(2,\mathbb{R})$ be a quasi-isometric embedding and $\pi:\pi_1(S) \rightarrow \langle a_1,a_2,a_3,a_4 \rangle$ be a retract of $\pi_1(S)$ onto the free subgroup $\langle a_1,a_2, a_3,a_4 \rangle$ of rank $4$. Let $\lambda:=\lambda_1(\rho_1(a_1))$ and $\mu:=\lambda_1(\rho_1(a_2))$ and fix $\theta \notin \pi \mathbb{Q}$.
\par We consider $x,y>0$ such that $x^2>|\lambda|$, $|\mu|>y^2$ and a homomorphism $\varepsilon: \langle a_1,a_2,a_3,a_4 \rangle \rightarrow \mathbb{R}^{+}$ with $\varepsilon(a_1)=x$ and $\varepsilon(a_2)=y$. Let $R_{\theta}:\langle a_1,a_2,a_3,a_4 \rangle \rightarrow \mathsf{SL}(2,\mathbb{R})$ be a homomorphism such that $R_{\theta}(a_1)$ and $R_{\theta}(a_2)$ are conjugate to the irrational rotation of angle $\theta$. The representation $\psi$ is defined as follows $$\psi(\gamma)=\begin{bmatrix}
\frac{1}{\varepsilon(\gamma)}\rho_1(\gamma) &0 \\ 
0 & \varepsilon(\gamma)R_{\theta}(\pi(\gamma))
\end{bmatrix}, \ \gamma \in \pi_1(S)$$ is a quasi-isometric embedding and not in the closure of Anosov representations. By the choice of $x,y$, the matrices $\psi(a)$ and $\wedge^2 \psi(b)$ have the numbers $xe^{i\theta}, xe^{-i \theta}$ and $\mu e^{i \theta}, \mu e^{-i \theta}$ as their eigenvalues of maximum modulus respectively. The claim follows.
\par  Now we construct the representation $\rho$. We consider $s,t, \theta \in \mathbb{R}$ satisfying $s>|\lambda|^{2/3}$, $|\mu|^{-2/3}<t<1$ and the representation $j_{s,t,\theta}:\langle a_1,a_2,a_3,a_4 \rangle \rightarrow \mathsf{SL}(3,\mathbb{R})$ such that $$j_{s,t,\theta}(a_1)=\begin{bmatrix}
s \cos\theta & -s \sin \theta  &0 \\ 
 s \sin \theta& s \cos \theta & 0\\ 
0 &0  & \frac{1}{s^2}
\end{bmatrix} \ \  j_{s,t,\theta}(a_2)=\begin{bmatrix}
t \cos\theta & -t \sin \theta  &0 \\ 
 t \sin \theta& t \cos \theta & 0\\ 
0 &0  & \frac{1}{t^2}
\end{bmatrix} $$ and $j_{s,t,\theta}(\langle a_3,a_4 \rangle)$ is Zariski dense in $\mathsf{SL}(3,\mathbb{R})$. By arguing as in Theorem \ref{limit}, the tensor product \hbox{$\rho:=\rho_1 \otimes (j_{s,t,\theta} \circ \pi)$} is a quasi-isometric embedding of $\pi_1(S)$ into $\mathsf{SL}(6,\mathbb{R})$ and also strongly irreducible. By the choice of $s$, the eigenvalues of the matrix $g:=\rho_1(a_1) \otimes j_{s,t,\theta}(\pi(a_1))$ in decreasing order of their moduli are $\lambda se^{i \theta}, \lambda se^{-i \theta}, \frac{s}{\lambda}e^{i \theta}, \frac{s}{\lambda}e^{-i\theta}, \frac{\lambda}{s^2}, \frac{1}{\lambda s^2}$. The matrices $g$ and $\wedge^3 g$ have their eigenvalues of maximum modulus non-real, hence $\rho_{\theta}$ is not a limit of $P_1$ or $P_3$-Anosov representations of $\pi_1(S)$ into $\mathsf{SL}(6,\mathbb{R})$. The eigenvalues of the matrix \hbox{$h:=\rho_1(a_2) \otimes j_{s,t,\theta}(\pi(a_2))$} in decreasing order of their moduli are $\frac{\mu}{t^2}, \mu t e^{i \theta}, \mu t e^{-i \theta}, \frac{1}{\mu t^2}, \frac{t}{\mu}e^{i \theta}, \frac{t}{\mu}e^{-i \theta}$. The matrix $\wedge^2 \rho_{\theta}(b)$ has its eigenvalues of maximum modulus non real, therefore $\wedge^2 \rho_{\theta}$ is not a limit of $P_1$-Anosov representations of $\pi_1(S)$. It follows that $\rho$ satisfies the required properties. \end{proof}

\begin{rmks} \textup{(i)} The construction in Proposition \ref{surface} works also for free groups.
\medskip

\noindent \textup{(ii)} We note that it is possible to describe the proximal limit set of the irreducible examples we have constructed. For a subgroup $H$ of $\mathsf{SL}(n,\mathbb{R})$, the proximal limit set $\Lambda^{\mathbb{P}}_H$ is defined to be the closure of the attracting fixed points of $P_1$-proximal elements of $H$ in $\mathbb{P}(\mathbb{R}^n)$. Let $\Delta$ be a non-elementary word hyperbolic group and suppose that \hbox{$\phi_1:\Delta \rightarrow \mathsf{SL}(n,\mathbb{R})$} and $\phi_2: \Delta \rightarrow \mathsf{SL}(m,\mathbb{R})$ are two irreducible representations such that $\phi_1\otimes \phi_2$ is irreducible, $\phi_1$ is \hbox{$P_1$-Anosov} and $\phi_2$ is either non-faithful or non-discrete. We claim that $\Lambda^{\mathbb{P}}_{(\phi_1 \otimes \phi_2)(\Delta)}$ is homeomorphic to $\Lambda^{\mathbb{P}}_{\phi_1(\Delta)}\times \Lambda^{\mathbb{P}}_{\phi_2(\Delta)}$. We may assume that $e_1 \otimes e_1$ is in $\Lambda^{\mathbb{P}}_{(\phi_1 \otimes \phi_2)(\Delta)}$ and $[e_1]\in \mathbb{P}(\mathbb{R}^n)$ is the attracting eigenline of $\phi_1(w_1)$. Let $w_0 \in \Delta$ be a non-trivial  element such that $\phi_2(w_0)=I_{m}$. For $x,y \in \partial_{\infty}\Delta$ with $\{x,y\}\cap \{w_{0}^{+},w_{0}^{-}, w_{1}^{+},w_{1}^{-}\}$ empty we may find a sequence $(\gamma_n)_{n \in \mathbb{N}}$ of elements of $\Delta$ with $x=\lim_{n}\gamma_n$ and $y=\lim_{n}\gamma_n^{-1}$. Then, $\lim_{n}(\gamma_n w_0 \gamma_{n}^{-1})w_{1}^{+}=x$ and hence $\lim_{n}(\phi_1\otimes \phi_2)(\gamma_n w_0\gamma_n^{-1})[e_1 \otimes e_1]=[u_x \otimes e_1]$ where $\xi_{\phi_1}^{1}(x)=[u_x]$. It follows that $\Lambda_{(\phi_1 \otimes \phi_2)(\Delta)}$ contains $[u_z \otimes e_1]$, $\xi_{\phi_1}^{1}(z)=[u_z]$, for every $z \in \partial_{\infty}\Delta$. Now since $(\phi_1 \otimes \phi_2)(\Delta)$ and $\phi_2(\Delta)$ act minimally on $\Lambda^{\mathbb{P}}_{(\phi_1 \otimes \phi_2)(\Delta)}$ and $\Lambda^{\mathbb{P}}_{\phi_2(\Delta)}$ respectively (see \cite[Lemma 2.5]{benoist-divisible0}), we conclude that \hbox{$\Lambda^{\mathbb{P}}_{(\phi_1 \otimes \phi_2)(\Delta)}=\big \{[u_1\otimes u_2]: [u_i] \in \Lambda^{\mathbb{P}}_{\phi_i(\Delta)}, \ i=1,2\big \}$}. We work similarly when $\phi_2$ is non-discrete. In particular, we deduce:
\medskip

\noindent \textup{(a)} In the constrution of $\rho$ in Theorem \ref{limit} (ii) the representation $A:F\rightarrow \mathsf{SL}(3,\mathbb{R})$ can be chosen to be non-discrete and hence the proximal limit set of $\rho(\Gamma)$ in $\mathbb{P}(\mathbb{R}^{12})$ is homeomorphic to $\partial_{\infty}\Gamma \times \mathbb{P}(\mathbb{R}^3)$.\\
\noindent \textup{(b)} In Theorem \ref{limit2}, the proximal limit set of $\tau_{n}(\Delta_n)$ in $\mathbb{P}(\mathbb{R}^{3n})$ is homeomorphic to $\partial_{\infty}\Delta_n \times C$, where $C$ is a Cantor set.\\
\textup{(c)} In Proposition \ref{surface}, for $s,t>0$ generic, the image of the representation $j_{s,t,\theta}$ is dense in $\mathsf{SL}(3,\mathbb{R})$ hence the proximal limit set of $\rho(\pi_1(S))$ in $\mathbb{P}(\mathbb{R}^{6})$ is homeomorphic to $S^1 \times \mathbb{P}(\mathbb{R}^3)$.\\

\section{Concluding remarks}
Let $G$ and $G'$ be two semisimple real algebraic Lie groups of real rank at least $2$ and $\iota: G \xhookrightarrow{} G'$ be an injective Lie group homomorphism. For an Anosov representation $\rho:\Gamma \rightarrow G$, the composition $\iota \circ \rho$ need not be Anosov into $G'$ with respect to any pair of opposite parabolic subgroups of $G'$. The failure of being Anosov under composing with a Lie group embedding has already been exhibited by Guichard-Wienhard, see the discussion in \cite[Section 4, p. 22]{GW}. Our examples are not limits of Anosov representations of their domain group into the bigger special linear group, but are Anosov when considered as representations into their Zariski closure. Let $P_{i}^{+}$ and $P_{i}^{-}$ be the subgroups of $\mathsf{SL}(d,\mathbb{R})$ defined as the stabilizers of a $i$-plane and a complementary $(d-i)$-plane of $\mathbb{R}^d$, $G=\mathsf{SL}(d,\mathbb{R})\times \mathsf{SL}(k,\mathbb{R})$ and denote by $\otimes:G \rightarrow \mathsf{SL}(kd,\mathbb{R})$ the tensor product representation. Let $\rho_1:\Gamma \rightarrow \mathsf{SL}(d,\mathbb{R})$ and $\rho_2:\Gamma \rightarrow \mathsf{SL}(k,\mathbb{R})$ be two representations such that $\rho_1$ is $P_i$-Anosov. The product $\rho_1 \times \rho_2:\Gamma \rightarrow G$ is Anosov with respect to the opposite parabolic subgroups $P_{i}^{+}\times \mathsf{SL}(k,\mathbb{R})$ and $P_{i}^{-}\times \mathsf{SL}(k,\mathbb{R})$ of $G$, while the tensor product $\rho_1 \otimes \rho_2:\Gamma \rightarrow \otimes(G)$ is Anosov with respect to the pair of opposite parabolic subgroups $\otimes(P_{i}^{+}\times \mathsf{SL}(k,\mathbb{R}))$ and $\otimes(P_{i}^{-}\times \mathsf{SL}(k,\mathbb{R}))$ of $\otimes(G)$. \par By following the lines of the proof of Theorem \ref{limit2}, we obtain examples of discrete and faithful representations which are not Anosov into their Zariski closure and not a limit of Anosov representations into $\mathsf{SL}(15,\mathbb{R})$. Let $M^3$ be a closed orientable hyperbolic $3$-manifold which is a surface bundle over the circle (with fibers homeomorphic to $S$) and contains a totally geodesic closed surface $S'$. By using the Klein combination theorem we may find a convex cocompact representation \hbox{$\rho:\pi_1(M^3)\ast F_2 \rightarrow \mathsf{SO}(4,1)$} whose restriction to the free factor $F_2$ is Zariski dense and \hbox{$\rho|_{\pi_1(M^3)}=\textup{diag}(1,\rho_{0})$} (here $\rho_0:\pi_1(M^3) \rightarrow \mathsf{SO}(3,1)$ denotes the holonomy representation associated to $M^3$). Since the quotient of $\pi_1(M^3)$ by the normal subgroup $\pi_1(S)$ is cyclic, the intersection \hbox{$H=\pi_1(S)\cap \pi_1(S')$} is a non-cyclic, normal free subgroup of $\pi_1(S')$. Let $F=\langle a_1,...,a_r \rangle$ be a free subgroup of $H$ of rank $r \geq 4$. We may find a finite cover $\hat{S}$ of $S$ such that $F\subset \pi_1(\hat{S})$ and there exists a retract $R:\pi_1(\hat{S}) \rightarrow F$ (see \cite[Theorem 1.6]{Long-Reid}) which we extend to a retract $R:\pi_1(\hat{S})\ast F_2 \rightarrow F$. Since $S'$ is totally geodesic in $M^3$ and $F$ is quasi-convex in $\pi_1(S')$, there exists a convex cocompact representation $\rho_1:F \rightarrow \mathsf{SL}(2,\mathbb{R})$ such that $\rho_0|_{F}=S\circ \rho_1$. As in Theorem \ref{limit2}, we consider $k$ very large and $A_k:F \rightarrow \mathsf{SL}(3,\mathbb{R})$ a Zariski dense representation, such that $A_{k}(\gamma)=\textup{diag}(1,\rho_1(\gamma))$ for $\gamma \in \langle a_1,a_2 \rangle$ and $A_{k}(\gamma)=\textup{diag}(1,\rho_1(\phi_k(\gamma)))$ for $\gamma \in \langle a_3,a_4 \rangle$. The representation $\rho':\pi_1(\hat{S})\ast F_2 \rightarrow \mathsf{SL}(15,\mathbb{R})$, $\rho'(\gamma)=\rho(\gamma)\otimes A_k(R(\gamma))$ for $\gamma \in \pi_1(\hat{S})\ast F_2$, is discrete and faithful (since $\rho$ is) and not in the closure of Anosov representations of $\pi_1(\hat{S})\ast F_2$ into $\mathsf{SL}(15,\mathbb{R})$. We note that since $A_k \circ R$ is not faithful and $\pi_1(S)$ is normal in $\pi_1(M^3)$, the representations $A_k \circ R$ and $\rho|_{\pi_1(\hat{S})}$ are not quasi-isometric embedings into $\mathsf{SO}(4,1)$ and $\mathsf{SL}(3,\mathbb{R})$ respectively. In particular, $\rho \times (A_k \circ R)$ is not Anosov with respect to any pair of opposite parabolic subgroups of $\mathsf{SO}(4,1)\times \mathsf{SL}(3,\mathbb{R})$. The Zariski closure of $\rho'$ is $\otimes(\mathsf{SO}(4,1)\times \mathsf{SL}(3,\mathbb{R}))$ and it follows (see for example \cite[Corollary 3.6]{GW}) that $\rho \otimes (A_k \circ R)$ is not Anosov in its Zariski closure.
 \end{rmks}

\vspace{0.3cm}

\end{document}